\newcommand\wh[1]{\hstretch{2}{\hat{\hstretch{.5}{#1}}}}
\newtheorem{theorem}{Theorem}
\newtheorem{lemma}{Lemma}[section]
\newtheorem{proposition}[lemma]{Proposition}
\newtheorem{corollary}[lemma]{Corollary}
\renewcommand{\pi}{\uppi}
\renewcommand{\rho}{\uprho}
\renewcommand{\gamma}{\upgamma}
\renewcommand{\nu}{\upnu}
\newcommand{\R}{\mathbb{R}}
\renewcommand{\P}{\mathbf{P}}
\newcommand{\E}{\mathbf{E}}
\DeclareMathOperator{\1}{\mathbf{1}}
\newcommand{\eps}{\epsilon}
\DeclareMathOperator{\Var}{\mathbf{Var}}
\DeclareMathOperator{\Cov}{\mathbf{Cov}}
\DeclareMathOperator{\Corr}{\mathbf{Corr}}
\DeclareMathOperator{\Err}{\mathbf{L}}
\DeclareMathOperator{\MSE}{\mathbf{R}}
\DeclareMathOperator{\Prob}{\mathrm{Prob}}
\DeclareMathSymbol{\shortminus}{\mathbin}{AMSa}{"39}
\title{A local-global principle for nonequilibrium steady states}
\author{Jacob Calvert\thanks{calvert@gatech.edu} }
\author{Dana Randall\thanks{randall@cc.gatech.edu}}
\affil{School of Computer Science, Georgia Institute of Technology}
\date{}
\begin{document}







\maketitle

\thispagestyle{empty}
\begin{abstract}
\noindent 
The global steady state of a system in thermal equilibrium exponentially favors configurations with lesser energy. This principle is a powerful explanation of self-organization because energy is a local property of a configuration. For nonequilibrium systems, there is no such property for which an analogous principle holds, hence no common explanation of the diverse forms of self-organization they exhibit. However, a flurry of recent work demonstrates that a local property of configurations called ``rattling'' predicts the global steady states of a broad class of nonequilibrium systems. We interpret this emerging physical theory in terms of Markov processes to generalize and prove its main claims. Surprisingly, we find that the idea at the core of rattling theory is so general as to apply to equilibrium and nonequilibrium systems alike. Its predictions hold to an extent determined by the relative variance of, and correlation between, the local and global ``parts'' of an arbitrary steady state. We show how these key quantities characterize the local-global relationships of random walks on random graphs, various spin-glass dynamics, and models of animal collective behavior.
\end{abstract}


\newpage
\setcounter{page}{1}
\section{Introduction}

Self-organization abounds in nature, from the spontaneous assembly of fractal protein complexes \cite{sendker_emergence_2024}, to nests of army ants, or bivouacs, formed by entangling potentially millions of their own, living bodies \cite{daniel_j_c_kronauer_army_2020}, and ``marching bands'' of desert locusts that can span hundreds of square kilometers \cite{buhl_disorder_2006}. These varied phenomena warrant a common explanation because each entails a system that occupies relatively few configurations\footnote{We use ``configuration'' to refer to what is typically called a ``microstate,'' to avoid confusion with the states of a Markov chain.} among an overwhelming number of alternatives. For physical systems in thermal equilibrium, the Boltzmann distribution provides such an explanation: systems preferentially occupy relatively few configurations with lower energy. Specifically, the probability of a configuration $x$ of a physical system in thermal equilibrium satisfies
\begin{equation}\label{bg}
\Prob(x) \propto e^{-\beta E(x)}
\end{equation}
in terms of a constant $\beta$ and the configuration's energy $E(x)$ \cite{sethna_statistical_2021}.

The Boltzmann distribution is remarkable because the energy of a configuration is ``local'' in the sense that it does not depend on the dynamics that connect other configurations. This fact makes it possible to understand the equilibrium configurations of many-body systems, like proteins, far more efficiently than is possible with methods that apply to nonequilibrium systems \cite{noe_boltzmann_2019}. In contrast, there can be no local property of configurations that generally determines their weight in the global, steady-state distribution of a nonequilibrium system \cite{landauer_inadequacy_1975,andresen_objections_1984,landauer_motion_1988}. Progress therefore requires either characterizing configuration weights in nonlocal terms or identifying special classes of nonequilibrium systems for which a local characterization does apply. The emerging theory of ``rattling,'' a far-reaching principle of nonequilibrium self-organization that was introduced by Chvykov and England \cite{chvykov_least-rattling_2018}, takes the latter approach and has found many applications \cite{chvykov_low_2021,gold_self-organized_2021,jackson_emergent_2021,yang_emergent_2022,england_self-organized_2022,kedia_drive-specific_2023}.

\subsection{Rattling theory} According to Chvykov et al. \cite{chvykov_low_2021}, if the motion of a system is ``so complex, nonlinear, and high-dimensional that no global symmetry or constraint can be found for its simplification,'' then it amounts to diffusion in an abstract state space. Their heuristic then predicts that a particular function of the motion's effective diffusivity in the vicinity of a configuration $x$, called the rattling, determines the weight of~$x$ in the steady-state distribution. As a simple example, consider a one-dimensional diffusion with variable diffusivity $D(x)$, which has a steady-state distribution $\Prob(x)$ that is proportional to $1/D(x)$. In this case, Chvykov et al.\ define the rattling $R(x)$ to be $\log D(x)$ and their prediction is literally true: the steady-state distribution has the form of the Boltzmann distribution with $\beta = 1$, written in terms of $R(x)$ as 
\begin{equation}\label{rattling boltzmann}
    \Prob (x) \propto e^{-\beta R (x)}.
\end{equation}

To apply rattling to more general dynamics, Chvykov et al.\ replace $D(x)$ with an estimate of the rate at which the mean squared displacement from $x$ grows in each dimension and the rattling $R(x)$ with~$(\log \det D(x))/2$, which is a higher-dimensional analogue of $\log D(x)$ \cite{chvykov_low_2021}. Moreover, instead of the strict proportionality in Eq.\ \ref{rattling boltzmann}, which implies that the logarithms of $\Prob(x)$ and $1/R(x)$ are perfectly linearly correlated with a slope of $\beta = 1$, their theory predicts only that the correlation $\rho$ is high and that the slope $\beta$, which may vary by system, is ``of order $1$'' \cite{chvykov_low_2021}. Qualitatively, they predict that {\em systems spend more time in configurations that they exit more slowly}. This claim may seem trivial at first, but it is not---the steady-state distribution is global, while the rattling is local---and it cannot be true in general.

Although the initial body of work on rattling does not identify the precise assumptions required for this prediction to hold, recent work demonstrates the breadth of its applicability, including experiments with swarms of robots \cite{chvykov_low_2021} and collectives of active microparticles \cite{yang_emergent_2022}; simulations of spin glasses \cite{gold_self-organized_2021} and stochastic dynamics with strong timescale separation \cite{chvykov_least-rattling_2018}; and numerical studies of the Lorenz equations \cite{jackson_emergent_2021} and mechanical networks \cite{kedia_drive-specific_2023}. Strikingly, in many cases, the slope $\beta$ in Eq.\ \ref{rattling boltzmann} is nearly equal to $1$ \cite{chvykov_low_2021}.

\subsection{Summary of our results} The remarkable accuracy and scope of rattling theory's predictions suggest that it has a simple, underlying mathematical explanation which, until now, has been missing. Our main results provide such an explanation in terms of Markov chains, which serve as a general model of nonequilibrium steady states \cite{schnakenberg_network_1976,crooks_path-ensemble_2000,jiang_mathematical_2004,zia_probability_2007,esposito_three_2010,seifert_stochastic_2012}. Specifically, we derive formulas for Markov chain analogues of the correlation $\rho$ and slope $\beta$ in terms of two further quantities, denoted $\wh \rho$ and $r$, that respectively characterize the correlation between the stationary distribution's local and global ``parts'' and the relative number of exponential scales over which these parts vary. The quantities $\wh\rho$ and $r$ together determine the accuracy of rattling theory's predictions for any Markov chain.\footnote{Here and throughout, by ``Markov chain,'' we mean an irreducible, continuous-time Markov chain on a finite state space. Every such Markov chain has a unique stationary distribution.}

Even when the rattling heuristic holds, in the sense that $\rho$ and $\beta$ are close to $1$, the steady-state probabilities of some states can vastly differ from those predicted by Eq.\ \ref{rattling boltzmann}. This discrepancy may limit the value of the rattling heuristic to some applications, like the estimation of functionals of the steady-state distribution. With this in mind, we also consider a stronger claim than the rattling heuristic makes: that the ratios of the steady-state probabilities and those in Eq.\ \ref{rattling boltzmann} are uniformly close to $1$. Our results identify conditions on the rates $Q(x,y)$ of a Markov chain that guarantee that its stationary probabilities $\pi(x)$ are within a small constant factor of those specified by an analogue of Eq.\ \ref{rattling boltzmann}: 
\begin{equation}\label{mc rattling}
\pi(x) \propto q(x)^{-\beta}.
\end{equation}
Here, $q (x) = \sum_{y \neq x} Q(x,y)$ denotes the exit rate of state~$x$, the logarithm of which is the Markov chain analogue of rattling (see Appendix~\ref{appendix a} for details). When Eq.\ \ref{mc rattling} holds, $\pi$ is akin to a Boltzmann distribution because, like the energy of a configuration, the exit rate of a state is ``local'' in the right sense (Fig.~\ref{experiment figure}). We show that for Eq.\ \ref{mc rattling} to hold up to a small constant factor, it suffices for the global part of the stationary distribution to vary little.

\begin{figure}
\centering
\includegraphics[width=11.4cm]{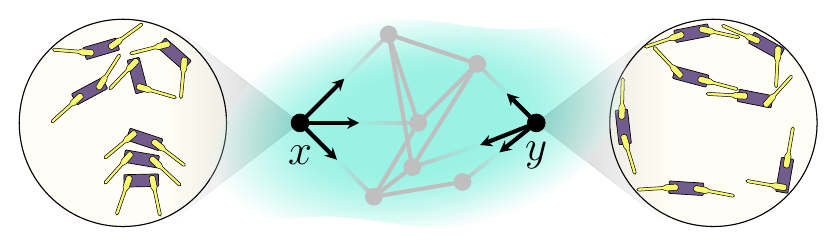}
\caption{Exit rates are local. A state $x$ of a Markov chain abstracts a configuration of a physical system, like the robot swarm of \cite{chvykov_low_2021}. The relative probability of $x$ in the stationary distribution is purely a function of the rates of the chain. The sum of the rates leaving $x$ is experimentally accessible, because this sum is the reciprocal of the average amount of time that it takes the system to leave $x$. This quantity is ``local'' to $x$ because it does not depend on the rates leaving any $y \neq x$. 
}
\label{experiment figure}
\end{figure}

\subsection{Applications} 

We demonstrate many potential applications across disciplines where the weights~$q(x)^{-\beta}$ can be used in place of the Boltzmann weights~$e^{-\beta E(x)}$, which only make sense in equilibrium settings. For example, Boltzmann generators are deep neural networks that are trained to generate unbiased one-shot samples from the equilibrium distribution of many-body systems, like proteins \cite{noe_boltzmann_2019}. The training data consist of pairs of configurations and their Boltzmann weights. Critically, these data can be efficiently generated because the weights are local functions of the configurations. By substituting these weights with Boltzmann-like weights, it may be possible to sample classes of nonequilibrium steady states far more efficiently than is possible with existing methods, like Markov chain Monte Carlo or molecular dynamics.

A second category of applications concerns the use of the quantities $r$ and $\wh\rho$ to characterize the way that driving affects the relationship between the local and global parts of the resulting nonequilibrium steady state. By ``driving,'' we mean the work done on a system by forces that are patterned in space and time. In stochastic thermodynamics, driven systems are often modeled by Markov chains with rates that are functions of a control parameter. For example, this is the context of the Jarzynski equality \cite{jarzynski_equilibrium_1997} and the Crooks fluctuation theorem \cite{crooks_nonequilibrium_1998,crooks_entropy_1999}. Because we have not precisely defined $r$ and $\wh\rho$ yet, we only explain the high-level idea. Briefly, every Markov chain defines a pair $(r,\wh\rho)$ in the strip~$[0, \infty)~\times~[-1,1]$, and driving a system amounts to placing a dynamics on these pairs. Our main results allow us to interpret certain regions of the strip as places where nonequilibrium steady states can be expressed with local weights. Analyzing driving in this way may improve our understanding of dissipative self-organization \cite{england_statistical_2013,england_dissipative_2015}.

\subsection{Broader perspective and related work} Markov chains that satisfy detailed balance are analogous to equilibrium systems because they are statistically time-reversible and have stationary distributions that can be expressed in the Boltzmann form \cite{zia_probability_2007}.\footnote{A Markov chain satisfies detailed balance if there is no net probability flux between any two states, i.e., if $\pi(x) Q(x,y) = \pi(y) Q(y,x)$ for all $x$ and $y$.} Accordingly, studies that model nonequilibrium steady states as Markov chains emphasize how and to what extent these chains violate detailed balance. For example, the conservation of probability requires that violations of detailed balance arise from net fluxes of probability around cycles of states. These circular fluxes form the basis of Hill's thermodynamic formalism \cite{hill_studies_1966,hill_free_1977}, Schnakenberg's network theory \cite{schnakenberg_network_1976,schnakenberg_thermodynamic_1981}, and cycle representations of Markov chains \cite{macqueen_circuit_1981,minping_circulation_1982,kalpazidou_cycle_2006,altaner_network_2012}, and they appear in the Helmholtz--Hodge decomposition of Markov chains \cite{strang_applications_2020}. Moreover, probability fluxes---and functions of them, like thermodynamic forces and entropy production---are the subject of nonequilibrium fluctuation theorems \cite{lebowitz_gallavotticohen-type_1999,qian_nonequilibrium_2001,andrieux_fluctuation_2007,bertini_flows_2015}, thermodynamic uncertainty relations \cite{barato_thermodynamic_2015,gingrich_dissipation_2016,horowitz_thermodynamic_2020}, and results concerning the response of nonequilibrium steady states to rate perturbations \cite{owen_universal_2020,owen_size_2023,fernandes_martins_topologically_2023,aslyamov_nonequilibrium_2024}.

It may come as a surprise, then, that our formulation of rattling theory in terms of Markov chains makes no reference to detailed balance or the nature of its possible failure. Instead, we view the search for nonequilibrium analogues of the Boltzmann distribution as part of a broader effort to understand when local information about the rates of a Markov chain suffices to determine or predict its stationary distribution (Fig.~\ref{experiment figure}). In this view, chains that satisfy detailed balance are not an ideal from which other chains depart; while their stationary distributions can be expressed in the Boltzmann form, the associated weights are nonlocal functions of the rates \cite{zia_probability_2007,altaner_network_2012} (Fig.~\ref{mctt figure}).

In general, if $Q$ is an irreducible rate matrix, then its stationary distribution $\pi$ is the unique solution to~$\pi Q = 0$, which implicates all of the rates $Q(u,v)$. In fact, according to the Markov chain tree theorem \cite{leighton_estimating_1986}, $\pi(x)$ is proportional to a sum over spanning trees of products of rates:
\begin{equation}\label{mctt}
    \pi(x) \propto \sum_{\mathcal{T}} \prod_{(u,v) \in \mathcal{T}_x} Q(u,v). 
\end{equation}
Here, the sum ranges over spanning trees $\mathcal{T}$ of the chain's transition graph and $\mathcal{T}_x$ is the set of directed edges that results from ``pointing'' all of the edges of $\mathcal{T}$ toward $x$ (Fig.~\ref{mctt figure}). Our results identify special classes of chains for which relatively little and, importantly, exclusively local information about their rates suffices to determine their stationary distributions. For example, for chains that satisfy Eq.\ \ref{mc rattling}, it suffices to know the exit rates $q(x)$ to determine $\pi$.

\begin{figure}
\centering
\includegraphics[width=11.4cm]{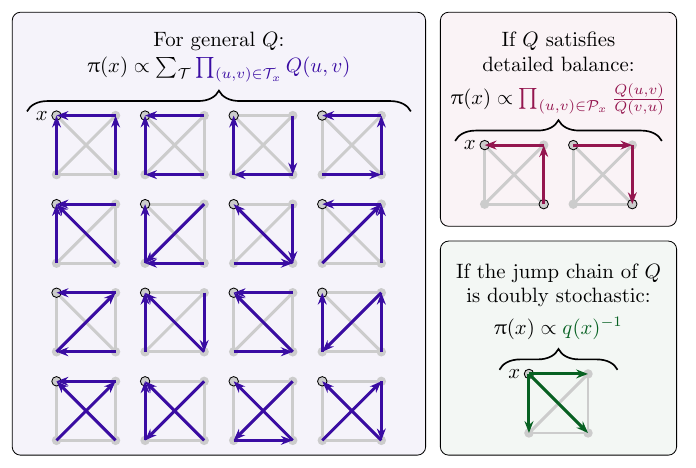}
\caption{The relationship between the rates of an irreducible Markov chain $Q$ and its stationary distribution $\pi$ specializes for certain classes of rate matrices. In general, all of the rates are necessary to determine $\pi (x)$; the Markov chain tree theorem specifies how to do so, by summing products of the rates along the spanning trees of the underlying adjacency graph (Eq.\ \ref{mctt}). However, if the chain satisfies detailed balance, then it suffices to know the ratio of the product of forward rates to the product of reverse rates along a path $\mathcal{P}_x$ to $x$ from a fixed, reference state. If the chain has a doubly-stochastic jump chain, then it suffices to know the exit rate $q(x)$ of each state $x$.}
\label{mctt figure}
\end{figure}

\section{Results}\label{sec:results}

We model the steady state of a physical system as a stationary, continuous-time Markov chain \cite{schnakenberg_network_1976,jiang_mathematical_2004,zia_probability_2007}. For simplicity, we assume that the state space has a finite number of states~$N$, and we refer to the chain by the matrix $Q$ of its transition rates. We further assume that the chain is irreducible, meaning that it can eventually reach any state from any other state; this guarantees that $Q$ has a unique stationary distribution $\pi$.

Our results emphasize the relationship between $\pi$, the exit rates
\[
q(x) := \sum_{y \neq x} Q(x,y),
\] 
and the stationary distribution of a chain that is closely related to $Q$, called the jump chain of $Q$. The jump chain is part of a standard construction of continuous-time Markov chains \cite[Section 2.6]{norris_markov_1997}. We define it to be the discrete-time Markov chain with transition probabilities 
\begin{equation*}\label{eq:jump chain tps}
\wh Q(x,y) := 
\begin{cases}
    Q(x,y)/q(x) & x \neq y,\\ 
    0 & x = y.
\end{cases}
\end{equation*}
The jump chain $\wh{Q}$ is irreducible because $Q$ is, hence it has a unique stationary distribution $\wh \pi$.

We view the logarithms of $1/q$ and $\wh \pi$ as the local and global ``parts'' of $\pi$. In this view, Markov chain analogues of the Boltzmann distribution (Eq.\ \ref{bg}) and the prediction of rattling theory (Eq.\ \ref{rattling boltzmann}) would approximate $\pi$ by a distribution with weights that are a function of $q$, with an error that depends on the relationship between $q$ and $\wh \pi$. Indeed, a simple heuristic suggests that the analogue of rattling $R$ is $\log q$ (see Appendix~\ref{appendix a}). Eq.\ \ref{rattling boltzmann} therefore suggests that we approximate $\pi(x)$ by 
\[
    \nu_\beta (x) := \frac{q(x)^{-\beta}}{\sum_y q(y)^{-\beta}}
\] 
for a real number $\beta$. Informally, our results identify two distinct ways for $\nu_\beta$ to approximate $\pi$ for some $\beta$, i.e., for $\pi$ to have local weights:
\begin{enumerate}
    \item The global part of $\pi$ is approximately uniform.
    \item The local and global parts of $\pi$ are approximately collinear.
\end{enumerate}

The first way for $\pi$ to have local weights is for $\wh \pi$ to be close to uniform. To state this more precisely, we will say that two numbers $a$ and $b$ are {\em within a factor of $k \geq 1$} if
\[
    k^{-1} \leq \frac{a}{b} \leq k.
\]
Additionally, we will say that two vectors or matrices $A$ and~$B$ of the same size are within a factor of $k \geq 1$ if all of their corresponding entries $A(x,y)$ and $B(x,y)$ are within a factor of $k$.

\begin{theorem}\label{thm a}
    If $\wh\pi$ is within a factor of $k \geq 1$ of the uniform distribution, then $\pi$ and $\nu_1$ are within a factor of~$k^2$. In particular, if $\wh\pi$ is uniform, then $\pi = \nu_1$.
\end{theorem}

In fact, $\wh\pi$ is uniform if and only if the jump chain $\wh Q$ is doubly stochastic, i.e., each of its rows and columns sums to~$1$. Since small perturbations of $\wh Q$ produce correspondingly small perturbations of $\wh\pi$ \cite{ocinneide_entrywise_1993,thiede_sharp_2015}, the conclusion of Theorem~\ref{thm a} applies whenever the original chain $Q$ has a jump chain that is nearly doubly stochastic. 

\begin{theorem}\label{thm b}
    If there is a doubly-stochastic matrix $D$ that is within a factor of $k \geq 1$ of the jump chain's transition probability matrix $\wh Q$, then $\pi$ and $\nu_1$ are within a factor of~$k^{2N}$. In particular, if $\wh Q$ is doubly stochastic, then $\pi = \nu_1$.
\end{theorem}

Theorem~\ref{thm b} makes it easy to generate examples of nonequilibrium steady states with local weights (Fig.~\ref{ds example figure}). Simply multiply the rows of a doubly-stochastic matrix by any exit rates $q(x) > 0$, and replace the diagonal entries with $-q(x)$. This yields a Markov chain $Q$ with local weights $-\!\log q(x)$ and the stationary distribution $\nu_1$.

\begin{figure}[t]
\centering
\includegraphics[width=\textwidth]{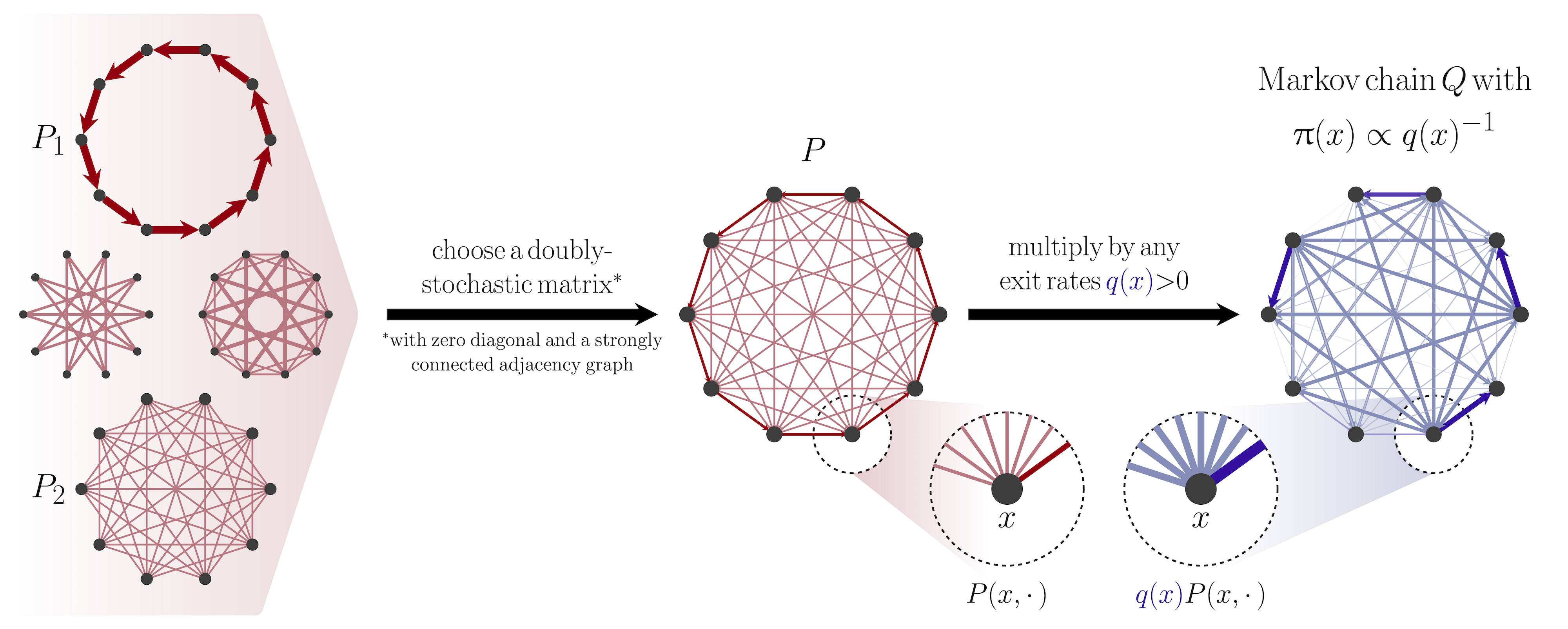}
\caption{A variety of Markov chains that satisfy $\pi (x) \propto 1/q(x)$. (Transition graphs with red edges depict doubly-stochastic matrices; edge widths are proportional to transition probabilities.) Doubly-stochastic matrices are varied, ranging from sparsely connected, directed cycles ($P_1$) to densely connected, undirected graphs ($P_2$). Moreover, they include arbitrary convex combinations, like $P$, which is a convex combination of $P_1$ and $P_2$. According to Theorems~\ref{thm a}~and~\ref{thm b}, scaling the rows $P(x,\cdot\,)$ by any positive numbers $q(x)$, and subsequently choosing the diagonal entries to make the row sums equal zero, produces the transition rate matrix of a continuous-time Markov chain~$Q(x,y) = q(x) P(x,y)$ that has a stationary distribution $\pi (x) \propto 1/q(x)$. (The blue edges have widths proportional to the corresponding rates of $Q$.)  This requires $P$ to have entries of zero on its diagonal (no ``self-loops'') and an adjacency graph that is strongly connected. The variety of doubly-stochastic matrices, and the freedom to scale their rows by any exit rates, means that an even greater variety of chains satisfy $\pi (x) \propto 1/q(x)$.}
\label{ds example figure}
\end{figure}

Even when $\wh\pi$ is far from uniform, the exit rates can ``predict'' the stationary probabilities, in the sense that the logarithms of $1/q(X)$ and $\pi (X)$ are highly collinear, for a random state $X$. This is true, for example, of many Markov chains with random rates, for which the slope of the linear relationship is nearly equal to $1$ (Fig.~\ref{correlation example figure}). Our next results explain why the correlations are high and the slopes are close to~$1$.

We measure the strength of collinearity using the linear correlation coefficient, defined for random variables~$U$ and $V$ that have finite, positive variances as
\[
    \Corr (U,V) := \frac{\E (UV) - \E(U) \E(V)}{\sqrt{\Var(U)\Var(V)}}.
\]
Specifically, we consider the correlation
\begin{equation}\label{corr def}
    \rho := \Corr \left( -\!\log q(X) , \log \pi (X)\right),
\end{equation}
for a random state $X \sim \P$. For example, $\P$ could be the uniform distribution over states or the stationary distribution~$\pi$. While the value of $\rho$ depends on the distribution of $X$, our results apply to any distribution. We assume only that $\pi (X)$ and $q(X)$ are non-constant, so that $\rho$ is defined.

Our next result is a formula for $\rho$ in terms the correlation between the local and global parts of $\pi$
\[
    \wh\rho := \Corr \left(-\!\log q(X), \log \wh\pi(X)\right),
\]
and the relative number of exponential scales over which they vary
\[
    r := \sqrt{\frac{\Var (\log \wh\pi(X))}{\Var (\log q(X))}}.
\]
Technically, $\wh\rho$ is undefined when $\wh\pi (X)$ is constant. However, in this case, we will adopt the useful convention that $\wh\rho = 0$.

\begin{theorem}\label{corr}
The correlation coefficient $\rho$ satisfies
\begin{equation}\label{main est}
\rho = \frac{1+\wh \rho r}{\sqrt{1 + 2 \wh \rho r + r^2}}.
\end{equation}
\end{theorem}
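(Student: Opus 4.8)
The plan is to reduce the entire statement to the elementary identity relating the stationary distribution $\pi$ of the continuous-time chain to the stationary distribution $\psi$ of its jump chain. First I would establish that
\[
\pi_i = \frac{\psi_i/q_i}{Z}, \qquad Z := \sum_{j} \frac{\psi_j}{q_j}.
\]
This follows by checking that the measure $i \mapsto \psi_i/q_i$ solves the global balance equations $\sum_{i \neq j} \pi_i q_{ij} = \pi_j q_j$. Substituting $\pi_i \propto \psi_i/q_i$ turns the left-hand side into $\tfrac1Z\sum_{i} \psi_i p_{ij}$, which equals $\tfrac1Z \psi_j$ precisely because $\psi$ is stationary for the jump chain, and this matches the right-hand side $\tfrac1Z \psi_j$. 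So the only genuine input is stationarity of $\psi$ together with the definition $p_{ij}=q_{ij}/q_i$.

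Taking logarithms yields the crucial structural fact: up to an additive constant, $\log \pi$ is the sum of the local variable $-\log q$ and the global variable $\log \psi$. Concretely, writing $X := -\log q_I$ and $Y := \log \psi_I$, we get
\[
\log \pi_I = -\log q_I + \log \psi_I - \log Z = X + Y - \log Z,
\]
so that $\rho = \Corr(X,\,X+Y)$ and the constant $\log Z$ drops out of every variance and covariance.

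The remainder is pure bilinearity. Expanding $\rho = \Cov(X, X+Y)/\sqrt{\Var(X)\Var(X+Y)}$ gives numerator $\Var(X) + \Cov(X,Y)$ and denominator $\sqrt{\Var(X)\bigl(\Var(X) + 2\Cov(X,Y) + \Var(Y)\bigr)}$. I then substitute the two parameters from \eqref{tilrho and r}: since $\Var(\log q_I) = \Var(X)$, we have $\Var(Y) = r^2 \Var(X)$ and $\Cov(X,Y) = \til\rho\,r\,\Var(X)$. Factoring $\Var(X)$ out of the numerator and denominator collapses everything to $\rho = (1 + \til\rho\, r)/\sqrt{1 + 2\til\rho\, r + r^2}$, which is \eqref{main est}.

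Finally I would dispose of the degenerate case in which $\psi$ is constant. Then $\Var(Y) = 0$, so $r = 0$ and the formula returns $\rho = 1$, consistent with the fact that $\log \pi_I = -\log q_I + \mathrm{const}$ makes $X$ and $\log \pi_I$ perfectly correlated; the stated convention $\til\rho := 0$ is harmless because the $\til\rho\, r$ terms already vanish when $r = 0$. I expect no real obstacle here: all the content lives in the balance-equation identity of the first step, and once the additive decomposition $\log\pi = -\log q + \log\psi - \log Z$ is recognized, the conclusion is a one-line correlation computation.
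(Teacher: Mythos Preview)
Your proposal is correct and follows essentially the same approach as the paper: establish $\pi_i \propto \psi_i/q_i$ via the balance equations (the paper's Proposition~\ref{rel-lemma}), take logarithms to get the additive decomposition $\log\pi_I = -\log q_I + \log\psi_I + \text{const}$ (Proposition~\ref{log pi proposition}), and then reduce the result to the general identity for $\Corr(X,X+Y)$ in terms of $\Corr(X,Y)$ and $\Var(Y)/\Var(X)$, exactly as the paper does. Your handling of the degenerate case where $\psi$ is constant also matches the paper's treatment.
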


According to Eq.\ \ref{main est}, the correlation $\rho$ is strictly positive unless $\wh\rho$ is at most $-1/r$, which is impossible if $r$ is less than~$1$ because $\wh\rho \in [-1,1]$. Plots of the contours of $\rho$ in the $(r,\wh\rho)$ strip (Fig.~\ref{fig:b1}) further show that $\rho$ is close to $1$ to the extent that $\wh\rho$ is close to $1$ or $r$ is close to $0$. A simple lower bound of $\rho$, which makes no reference to $\wh\rho$, reflects the latter fact (Corollary~\ref{cor:a1}).

\begin{figure}[t]
\centering
\includegraphics[width=\textwidth]{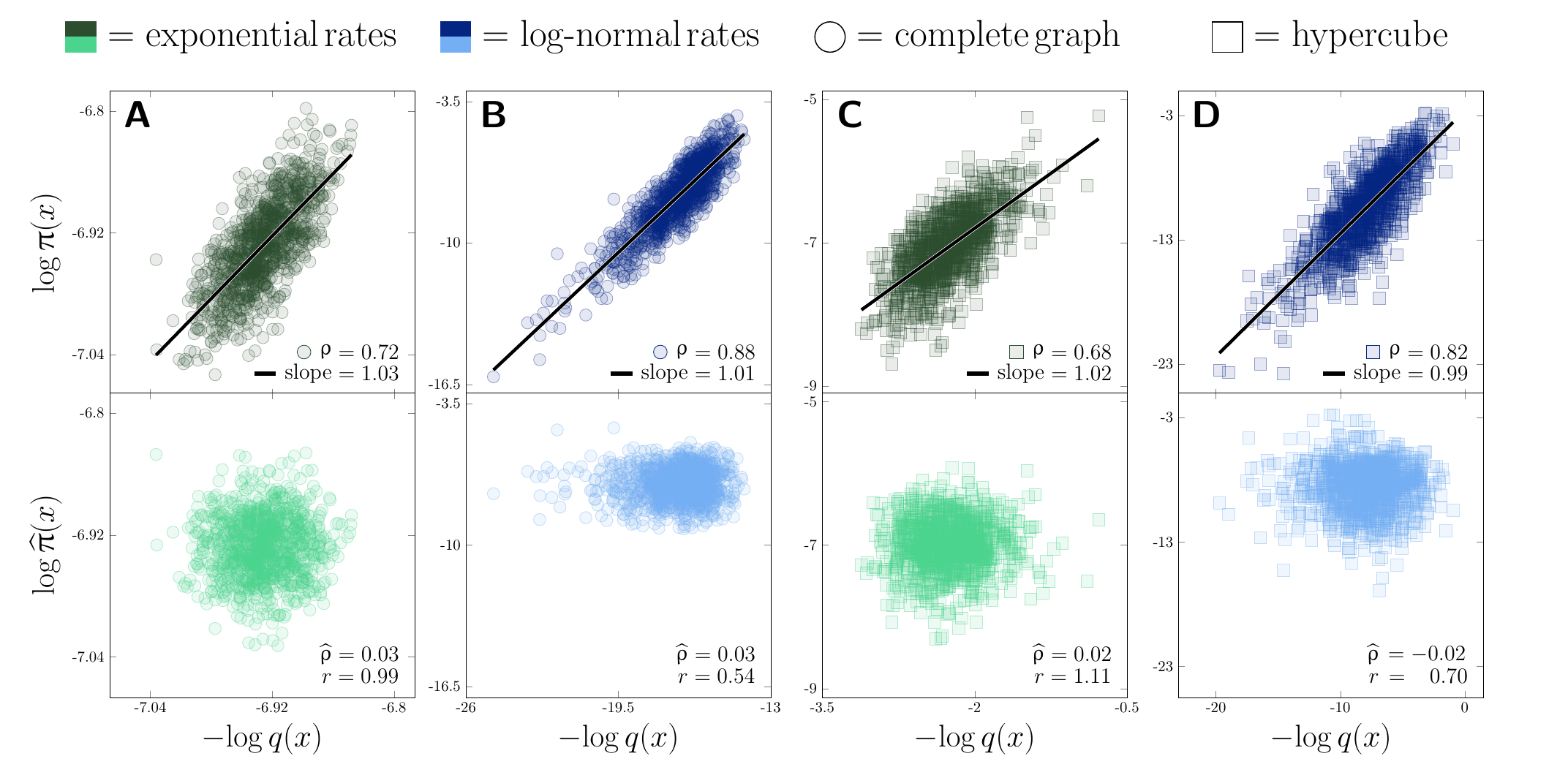}
\caption{Extent to which exit rates predict the stationary distributions of Markov chains with random transition rates. Each panel includes a scatter plot of $\log \pi(x)$ versus~$\shortminus\!\log q(x)$, paired with a corresponding plot of $\log \protect\wh{\pi} (x)$ versus~$\shortminus\!\log q(x)$, for one of four Markov chains $Q$. Each Markov chain has $2^{10}$ states that correspond to the vertices of either the complete graph (circular marks in A, B) or the $10$-dimensional hypercube (square marks in C, D). In (A, C), independent and identically distributed (i.i.d.) exponential rates $Q(x,y)$ and $Q(y,x)$ with mean $1$ connect every pair of states~$x,y$ that are adjacent in the underlying graph (green). In (B, D), the rates instead have i.i.d.\ log-normal distributions with parameters $\mu=0$ and $\sigma=5$ (blue). (A) Observe that, for exponential rates on the complete graph, the logarithms of $\pi (x)$ and $1/q(x)$ are highly collinear, while those of $\protect\wh\pi (x)$ and $1/q(x)$ are not. (B) The same is true of log-normal rates on the complete graph, but the collinearity of $\log \pi(x)$ and~$\shortminus\!\log q(x)$ is greater than in (A) because $\protect\wh\pi(x)$ varies over relatively fewer exponential scales than $q(x)$ in the case of log-normal rates. (In the language of Theorem~\ref{corr}, $r$ is smaller in B than in A.) (C, D) On the hypercube,~$\log \pi(x)$ and~$\shortminus\!\log q(x)$ remain highly collinear, but to a lesser extent than in (A, B), because the variance of $\protect\wh\pi(x)$ is greater relatively in (C, D). The slopes of the lines of best fit are nearly equal to $1$ in all cases, because the correlation $\protect\wh\rho$ between the local and global ``parts'' of $\pi$ is close to $0$ (see Theorem~\ref{mse}).}
\label{correlation example figure}
\end{figure}

Theorem~\ref{corr} explains the relatively high correlations exhibited by Markov chains with random rates (Fig.~\ref{correlation example figure}). Specifically, when the random rates are exponentially distributed, the local and global parts of $\pi$ are approximately uncorrelated and have nearly equal variances (Fig.~\ref{correlation example figure}A and \ref{correlation example figure}C), meaning that $\wh\rho \approx 0$ and $r \approx 1$, hence~$\rho \approx 1/\sqrt{2} \approx 0.71$ by Eq.\ \ref{main est}. When the rates instead have log-normal distributions, the local and global parts remain approximately uncorrelated, but the local part varies relatively more, leading to a lesser~$r$ and a correspondingly greater $\rho$  (Fig.~\ref{correlation example figure}B and \ref{correlation example figure}D). The difference between the cases of exponential and log-normal rates is not due to the exponential rates' lesser variance; the key quantities in Fig.~\ref{correlation example figure}A remain constant as the variance of the exponential rates increases over hundreds of scales (Fig.~\ref{fig:b2}). Note that the correlations in Fig.~\ref{correlation example figure} concern a uniformly random state $X$. If $X \sim \pi$ instead, then the experiments in Fig.~\ref{correlation example figure}A--C have qualitatively similar results, but the case of log-normal rates on the hypercube (Fig.~\ref{correlation example figure}D) is markedly different: the correlation $\wh\rho$ is $-0.40$, and the linear relationship between the logarithms of $1/q(X)$ and $\pi (X)$ has a lesser slope of $0.72$ (Fig.~\ref{fig:b3}).

In contrast to the conclusions of Theorems~\ref{thm a}~and~\ref{thm b}, there may be no $\beta$ for which $\pi$ and $\nu_\beta$ are within a small factor, even when $\rho$ is close to $\pm 1$. (See Proposition~\ref{prop:l1 l2 bd} for a bound on this factor.) However, in this case, there is a $\beta$ for which~$\nu_\beta$ accurately predicts the relative probabilities of states, in the sense of making the following, log-ratio error small:
\begin{equation}\label{eq:err}
\Err (\beta) := \E \left( \frac12 \log \left( \frac{\pi(X)}{\pi(Y)} \middle/ \frac{\nu_{\beta} (X)}{\nu_{\beta} (Y)}\right)^2 \right).
\end{equation}
Note that the exponent in the expectation modifies the logarithm, not its argument, and the expectation is with respect to $X$ and $Y$, which are independent and distributed according to $\P$. In fact, while $\Err(\beta)$ is distinct from the mean squared error that is used to fit the lines in Fig.~\ref{correlation example figure}, its minimizer $\beta^\ast$ coincides with the slopes of these lines (Proposition~\ref{l to r}). Our final result therefore explains that these slopes are close to $1$ when $\wh\rho$ is close to $0$.

\begin{theorem}\label{mse}
The log-ratio error $\Err(\beta)$ is minimized by $\beta^\ast = 1 + \wh \rho r$ and $\Err(\beta^\ast)$ equals
\[
\left(1-\rho^2\right) \Var (\log {\pi} (X)) = \left(1-{\wh \rho\,}^2\right) \Var (\log {\wh \pi} (X)).
\]
\end{theorem}

Theorem~\ref{mse} states that the local distribution $\nu_\beta$ that best approximates $\pi$ in the sense of preserving its relative probabilities is $\nu_{\beta^\ast}$. While the Markov chains with random rates in Fig.~\ref{correlation example figure} have $\beta^\ast \approx 1$, many familiar Markov chains have $\beta^\ast \not\approx 1$ and even $\beta^\ast < 0$. For example, consider the random energy model (REM) on the $N$-dimensional hypercube, where $x_i \in \{0,1\}$ denotes the $i$\textsuperscript{th} coordinate of state $x \in \{0,1\}^N$ \cite{derrida_random-energy_1980,derrida_random-energy_1981}. The REM assigns i.i.d.\ random energies $E(x)$ to the states, which determine a Boltzmann probability distribution~$\pi(x) \propto \exp(-E(x))$. The two Markov chains defined by the rates $Q(x,y) = \exp(E(x))$ and $Q(x,y) = \exp(-E(y))$ are in detailed balance with $\pi$, as are the chains determined by the weighted geometric means of these rates, defined for~$\lambda \in [0,1]$ by $Q (x,y) = \exp (\lambda E(x) -(1-\lambda) E(y))$. (Markov chains of this kind are known as Glauber dynamics \cite{mathieu_convergence_2000}.) As $\lambda$ ranges from~$0$ to $1$, the correlation $\rho$ increases from $0$ to $1$, while the slope~$\beta^\ast$ remains nonnegative (Fig.~\ref{rem example figure}, red points). In contrast, under the Sherrington--Kirkpatrick (SK) model, i.i.d.\ random couplings $g_{ij}$ of the coordinates determine the energies~$E(x)~=~\sum_{i,j} g_{ij} x_i x_j$ of each state \cite{sherrington_solvable_1975}. Due to the dependence of nearby states' energies, the correlations and slopes take negative values when $\lambda$ is less than roughly $0.4$ (Fig.~\ref{rem example figure}, blue points).

\begin{figure}[htbp]
\centering
\includegraphics[width=\textwidth]{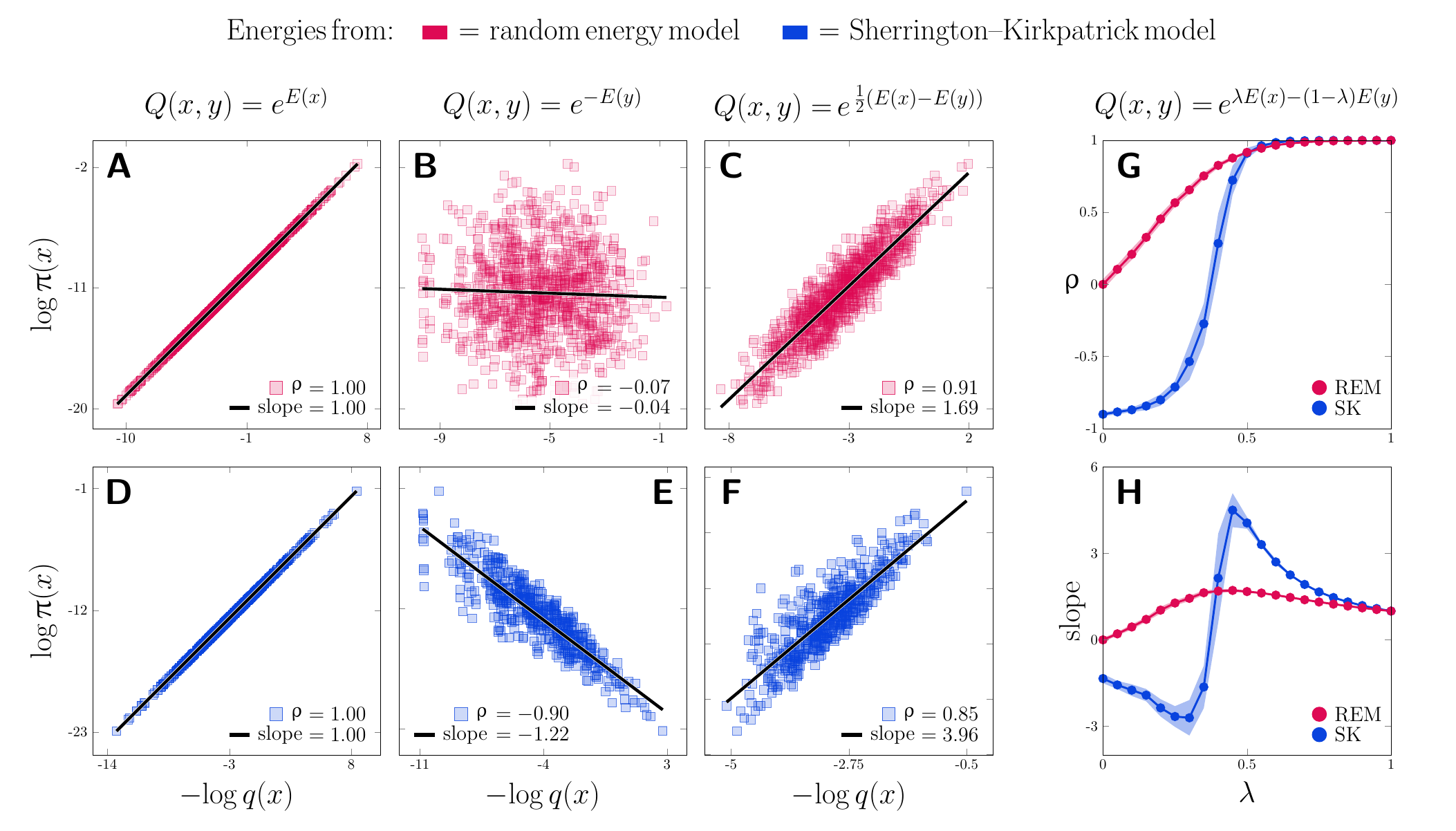}
\caption{The relationships between exit rates and stationary probabilities for spin glasses with reversible dynamics. Each Markov chain has $2^{10}$ states corresponding to the vertices of an $10$-dimensional hypercube, random rates satisfying detailed balance, a Boltzmann stationary distribution given by $\pi(x) \propto \exp (-E(x))$, where the rates and stationary distribution depend on energies $E(x)$ assigned to each state. In (A--C), these energies are given by the random energy model (REM)---namely, i.i.d.\ normal random variables with a mean of $0$ and a variance of $10$. In (D--F), the energies are those of the Sherrington--Kirkpatrick (SK) model, which are correlated through random couplings~$g_{ij}$, according to $E(x) = \sum_{i,j} g_{ij} x_i x_j$, with couplings that are i.i.d.\ normal random variables with a mean of $0$ and a variance of $1/10$. (A) We can see that the first set of rates has exit rates that exactly satisfy a linear relationship between the logarithms of $\pi (x)$ and $1/q(x)$. (B) The other extreme, transition rates that depend on the energy of the destination, produce no detectable linear relationship. In (C), the geometric mean of the first two rates produces high correlation, with a slope greater than $1$. (D) For the SK model, the logarithms of $\pi (x)$ and $1/q(x)$ are exactly collinear, as in (A). (E) However, because adjacent states $x, y$ have correlated energies under the SK model,~$\log \pi(x)$ and~$\shortminus\!\log q(x)$ can be highly negatively correlated, unlike in (B). (F) The geometric mean of the rates again produces high, positive correlation, but with an even greater slope. Finally, (G, H) show the transition from local-global anti-correlation to correlation occurs in the SK model when the weighting $\lambda$ in the geometric mean of the extreme rates is roughly $0.4$.}
\label{rem example figure}
\end{figure}

Theorems~\ref{corr}~and~\ref{mse} characterize the relationship between the exit rates and stationary probabilities of a Markov chain, in terms of $\wh\rho$ and $r$. We can use these results to better understand how the local-global relationship changes as the parameters of the chain vary. Fig.~\ref{contour fig} demonstrates this idea by comparing two models of ant colony behavior that undergo stochastic bifurcations as colony size varies. The F{\"o}llmer--Kirman (FK) model describes the number of ants that choose one of two identical paths to a food source, under the effects of random switching and recruitment \cite{kirman_ants_1993,biancalani_noise-induced_2014,holehouse_non-equilibrium_2022}. It predicts that, as colony size $N$ increases, the colony abruptly transitions from alternately concentrating on one path to splitting evenly between the two. The Beekman--Sumpter--Ratnieks (BRS) model concerns the number of ants that follow and reinforce a pheromone trail, by spontaneously finding it or by being recruited to it \cite{beekman_phase_2001,sumpter_nonlinearity_2003}. It predicts that there is a critical colony size above which the colony sustains a trail, but below which it does not. Despite the apparent similarities of the FK and BRS models, the relationships between their local and global parts change in entirely different ways as colony size increases, as the quantities $\wh\rho$ and $r$ show.

\begin{figure}[htbp]
\centering
\includegraphics[width=\textwidth]{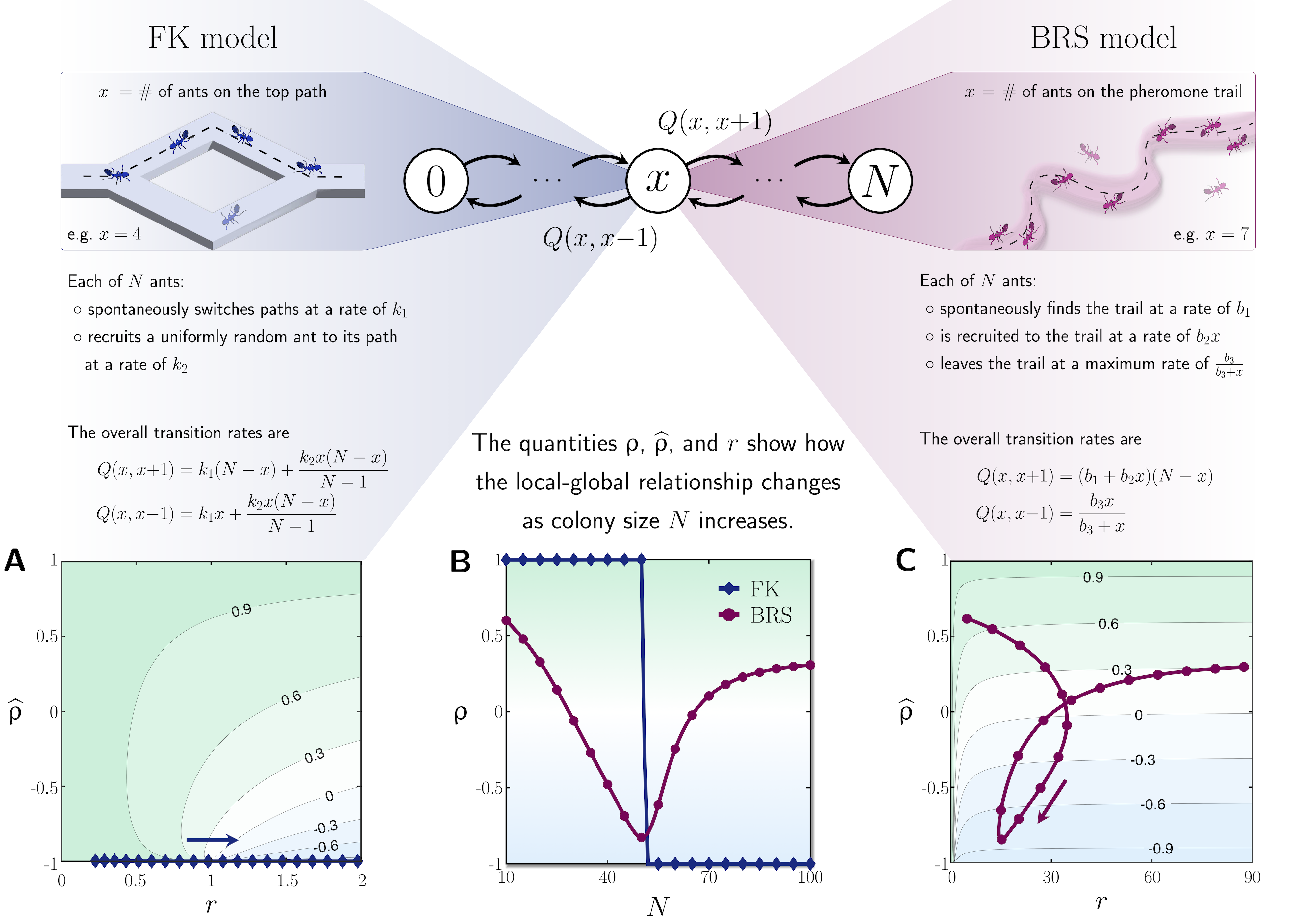}
\caption{Stochastic bifurcations in two models of ant collective behavior, characterized in terms of $\rho$, $\protect\wh\rho$, and $r$. The F{\"o}llmer--Kirman (FK) and Beekman--Sumpter--Ratnieks (BRS) models are Markov chains with nearest-neighbor jumps on the state space $\{0,1,\dots,N\}$. (A) The local and global parts of the FK model's stationary distribution~$\pi$ are essentially perfectly anticorrelated for every $N$. However, as $N$ increases, the global part begins to vary more than the local part, which is reflected by $r$ going from below $1$ to above $1$, and which causes the correlation $\rho$ between $\log \pi (x)$ and $\shortminus\!\log q(x)$ to abruptly change from $1$ to~$-1$, as shown in (B). (C) In contrast, the BRS model generally has $r$ much larger than $1$, so the correlation $\rho$ between $\log \pi(x)$ and $\shortminus\!\log q(x)$ is essentially determined by the correlation $\protect\wh\rho$ between the local and global parts. The correlation $\protect\wh\rho$ decreases as~$N$ increases to the bifurcation point, but recovers to a moderate value for larger $N$. The same is true of $\rho$ (B). (A--C) To make the bifurcation points of both models approximately $N=50$, we used the parameters $(k_1,k_2) = (0.02,1)$ and~$(b_1,b_2,b_3)~=~(0.03,0.002,2)$.}
\label{contour fig}
\end{figure}

Under the FK model, $\rho$ sharply decreases from $1$ to~$-1$, as the colony size $N$ increases through the bifurcation point (Fig.~\ref{contour fig}B). The abrupt transition from correlation to anti-correlation in fact arises from a gradual increase in $r$, the variance of the steady state's global part relative to its local part (Fig.~\ref{contour fig}A). The slope $\beta^\ast$ decreases linearly during this transition, as Theorem~\ref{mse} predicts (Fig.~\ref{fig:b4}A). The exit rates and steady state of the BRS model are also highly anti-correlated in the vicinity of the critical colony size (Fig.~\ref{contour fig}B). However,~$\rho$ changes more gradually and the correlation recovers to a positive value at larger colony sizes. Fig.~\ref{contour fig}C shows that, in contrast to the FK model, the variation in $\wh\rho$ drives the decrease in $\rho$ near the critical colony size. Since $r$ is much larger than $1$ for all but the smallest colonies, its variation matters little to $\rho$, which is reflected by the flatness of the contours in Fig.~\ref{contour fig}C. Instead, the variation of $r$ greatly affects $\beta^\ast$, which takes values as low as roughly $-15$ and as high as roughly $30$, as the colony size increases (Fig.~\ref{fig:b4}B). We note that the results in Fig.~\ref{contour fig} concern a uniformly random state $X$. The curves of the BRS model differ substantially when $X \sim \pi$, but a sharp decrease in $\rho$ near the bifurcation point arises in largely the same way (Figs.~\ref{fig:b5}~and~\ref{fig:b6}).

\section{Discussion}
 
The rattling heuristic, like the Boltzmann distribution, relates a local property of configurations to their weights in the global steady state. Its virtue is that rattling is simple to estimate, only requiring short observations of the dynamics from a configuration \cite{chvykov_low_2021}. However, determining the accuracy of rattling, like detecting the failure of detailed balance, generally requires global information about the dynamics. Practical approaches to the latter are the focus of recent work \cite{battle_broken_2016,rupprecht_fresh_2016,gnesotto_broken_2018,li_quantifying_2019,martinez_inferring_2019,lynn_broken_2021}; we anticipate parallel developments for rattling.

The conceptual heart of rattling theory is the simple observation that many systems tend to spend more time in configurations that they exit more slowly. While systems can exhibit this tendency regardless of whether detailed balance holds, it fails to be universal because the time that a system spends in a configuration depends on both the duration and the frequency of its visits, which can oppose one another. The former is a local property of a configuration, while the latter is generally a global property of the system. In this sense, the failure of the rattling heuristic requires ``adversarial global structure,'' as Chvykov et al.\ claim \cite{chvykov_low_2021}. Theorem~\ref{corr} implies a precise version of this claim: $\rho$ is positive unless the local and global parts of the stationary distribution are sufficiently anticorrelated, that is, unless $\wh\rho$ is at most $-1/r$. In particular, the rattling heuristic always holds when the local part varies more than the global part ($r < 1$).

Together, $r$ and $\wh\rho$ determine the accuracy of the rattling heuristic as well as the form of the weights that it predicts (Theorems~\ref{corr} and \ref{mse}). We view each pair $(r,\wh\rho)$ as defining a class of chains, say $\mathcal{Q}_{(r,\wh\rho)}$, for which a certain local-global principle applies (Fig.~\ref{mctt figure}). The simplest class $\mathcal{Q}_{(0,0)}$ consists of chains with uniform global part, which therefore satisfy $\rho = \beta = 1$ and $\pi(x) \propto 1/q(x)$. Theorem~\ref{thm b} explains that it is simple to make these chains by scaling the entries of doubly-stochastic matrices (Fig.~\ref{ds example figure}). More generally, the class~$\mathcal{Q}_{(r,0)}$ consists of chains with uncorrelated local and global parts, hence they satisfy $\rho = 1/\sqrt{1+r^2}$ and $\beta = 1$, and the rattling heuristic predicts their weights to be $1/q(x)$. As Fig.~\ref{correlation example figure} shows, many chains with random rates essentially belong to this class. The chains in $\mathcal{Q}_{(\cdot,0)}$ are analogous to the systems that Chvykov et al.\ consider to be the domain of rattling theory \cite{chvykov_low_2021}. However, the rattling heuristic also perfectly predicts the weights $1/q(x)^\beta$ of chains in the class $\mathcal{Q}_{(r,\pm 1)}$, with an exponent $\beta = 1\pm r$ that is generally not of order~$1$ and which can even be negative. Figs.~\ref{rem example figure} and \ref{contour fig} include relevant examples.



We define the correlation $\rho$ and other key quantities in terms of a random state $X$, which can have any distribution that makes $\pi (X)$ and $q(X)$ non-constant (see Eq.\ \ref{corr def}). The preceding discussion and the examples in Figs.~\ref{correlation example figure}--\ref{contour fig} emphasize the case when $X$ is uniformly random, primarily because $\rho$ then coincides with the correlation that Chyvkov et al.\ sought to explain \cite{chvykov_low_2021}. However, Theorems~\ref{thm a}--\ref{mse} apply for any permissible distribution on $X$. It may seem natural, for example, to instead choose $X \sim \pi$, so that configurations with greater steady-state weight matter more. This choice leads to different results in some cases, like Fig.~\ref{correlation example figure}D and Fig.~\ref{contour fig} (see Figs.~\ref{fig:b3}D and \ref{fig:b5} for comparison). Nonetheless, the role of $X$ is to model which configurations of a system an observer sees when seeking to predict the steady-state weights using the rattling heuristic (Figure~\ref{experiment figure}). In this scenario, $\pi$ is unknown to the observer, hence $X \sim \pi$ may be unrealistic.



Beyond the application of rattling theory to characterize the local-global relationship in Markov chains (Figure~\ref{contour fig}), our results suggest exploring the relationship between the property of having local weights and central topics of Markov chain theory, like metastability \cite{bovier_metastability_2015,landim_metastable_2019}. Although much of this work concerns discrete-time Markov chains, our results essentially apply to these chains as well, so long as they are irreducible and have positive diagonal entries, because these chains can be ``continuized'' to obtain closely related, continuous-time chains with finite exit rates \cite{aldous_reversible_2002}.

\section{Proofs}

We prove Theorems~\ref{thm a}--\ref{mse} in this section. These results stem from the standard fact that the fraction of time~$\pi (x)$ that a Markov chain $Q$ spends in state $x$ in the long run is proportional to the fraction of visits~$\wh\pi(x)$ that it makes to state $x$, multiplied by the expected duration $1/q(x)$ of each visit. Throughout this section, we continue to assume that $Q$ is an irreducible Markov chain on a finite number $N$ of states.

\begin{proposition}\label{rel-lemma}
The stationary distributions of a Markov chain $Q$ and its jump chain $\wh Q$ are related by  
\begin{equation}\label{pi propto psi}
\pi(x) = \frac{\wh\pi(x)/q(x)}{\sum_y \wh\pi(y)/q(y)}.
\end{equation}
\end{proposition}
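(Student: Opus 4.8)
The plan is to verify directly that the claimed expression satisfies the global balance equations of the continuous-time chain, and then to invoke uniqueness of the stationary distribution. Write the generator as the matrix with off-diagonal entries $q_{ij}$ and diagonal entries $-q_i$, so that stationarity of $\pi$ is the statement that inflow equals outflow at every state:
\[
\sum_{i \neq j} \pi_i q_{ij} = \pi_j q_j \qquad \text{for every } j \in S.
\]
Likewise, stationarity of $\psi$ for the jump chain with transition probabilities $p_{ij} = q_{ij}/q_i$ is the statement $\sum_{i \neq j} \psi_i (q_{ij}/q_i) = \psi_j$ for every $j$.

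First I would introduce the unnormalized candidate measure $\nu_i := \psi_i / q_i$ and check that it solves the balance equations. Substituting $\nu_i = \psi_i/q_i$ into the left-hand side gives $\sum_{i \neq j} (\psi_i/q_i) q_{ij} = \sum_{i \neq j} \psi_i p_{ij}$, which equals $\psi_j$ by the stationarity of $\psi$; meanwhile the right-hand side is $(\psi_j/q_j)\, q_j = \psi_j$ as well. Hence $\nu$ satisfies the balance equations exactly, and the two stationarity conditions are seen to match term by term.

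Next, since the rates make the chain irreducible on the finite state space $S$, its stationary measure is unique up to a positive scalar, so $\pi$ must be proportional to $\nu$. Normalizing $\nu$ to a probability vector by dividing by $\sum_k \psi_k/q_k$ then yields the stated formula for $\pi_i$.

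There is no serious obstacle here; the one point requiring care is the bookkeeping around the diagonal of the generator. One must confirm that the jump-chain convention (no self-transitions) is consistent with the ``inflow equals outflow'' form above, so that the diagonal term $-q_j$ is precisely what converts the off-diagonal sum into the factor $q_j$ on the right-hand side, thereby lining up the $\pi$-balance equation with the $\psi$-balance equation. Once this alignment is made explicit, the argument reduces to the one-line substitution above together with uniqueness.
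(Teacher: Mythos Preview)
Your proof is correct and is essentially identical to the paper's: both verify that $\psi_i/q_i$ satisfies the global balance equations of $Q$ by reducing them to the stationarity of $\psi$ under the jump chain $P$, and then normalize. The only cosmetic difference is that you make the appeal to uniqueness of the stationary distribution (via irreducibility on a finite state space) explicit, whereas the paper leaves it implicit.
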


\begin{proof}
Since $\wh\pi$ is the stationary distribution of $\wh Q$, by the definition of the jump chain, $\wh\pi$ satisfies
\[
\wh\pi(x) = \sum_{y \neq x} \wh\pi(y) \wh Q(y,x) = \sum_{y \neq x} \frac{\wh\pi(y)}{q(y)} Q(y,x).
\]
In other words, $\wh \pi/q$ is an invariant measure for $Q$:
\[
(\wh\pi/q) Q = \mathbf{0}.
\]
Normalizing $\wh\pi/q$ therefore gives the stationary distribution $\pi$.
\end{proof}

\begin{proof}[Proof of Theorem~\ref{thm a}]
Recall that we are given $k \geq 1$ such that 
\begin{equation}\label{eq:within k of unif}
1/(kN) \leq \wh\pi(x) \leq k/N,
\end{equation}
for every state $x$, and we must show that $\pi(x)$ and $\nu_1 (x)$ are within a factor of $k^2$: 
\[
k^{-2} \leq \pi(x)/\nu_1(x) \leq k^2.
\]
For the upper bound, we use Eq.\ \ref{pi propto psi} to write $\pi(x)$ in terms of $\wh\pi(x)$ and then use Eq.\ \ref{eq:within k of unif} twice to find that
\begin{equation}\label{eq:pi to nu1}
\pi (x) = \frac{\wh\pi(x)/q(x)}{\sum_y \wh\pi(y)/q(y)} \leq k^2 \frac{1/q(x)}{\sum_y 1/q(y)} = k^2 \nu_1 (x).
\end{equation}
The lower bound follows in a similar way.
\end{proof}

The proof of Theorem~\ref{thm b} combines a perturbation bound for Markov chain stationary distributions with Theorem~\ref{thm a}.

\begin{proof}[Proof of Theorem~\ref{thm b}]
We are given a doubly-stochastic matrix $D$ of the same size as $\wh Q$, and a number $k \geq 1$ such that $\wh Q$ and $D$ are within a factor of $k$, and we must show that $\pi$ and $\nu_1$ are within a factor of $k^{2N}$. We use Theorem~1 of \cite{ocinneide_entrywise_1993} to relate the ratio of the entries of $\wh Q$ and $D$ to the ratio of their stationary probabilities. It states that, if irreducible stochastic matrices on $N$ states are within a factor of $k$, then their stationary distributions are within a factor of $k^N$. This result applies to $\wh Q$ and $D$ because we assumed that~$\wh Q$ is irreducible and that $\wh Q$ and $D$ are within a factor of $k$, hence $D$ is also irreducible. Since $D$ is doubly stochastic, it has a uniform stationary distribution. Theorem~1 of \cite{ocinneide_entrywise_1993} and the fact that $\wh Q$ and $D$ are within a factor of $k$ therefore imply that $\wh\pi$ and the uniform distribution are within a factor of $k^N$. Using Theorem~\ref{thm a}, we conclude that $\pi$ and $\nu_1$ are within a factor of $k^{2N}$.
\end{proof}

The next proof uses the basic fact that, for any two random variables $U$ and $V$ with positive, finite variances, the correlation of $U$ and $U+V$ equals
\[
\Corr (U,U+V) = \frac{\Var (U) + \Cov (U,V)}{\sqrt{\Var (U) \left( \Var (U) + 2 \Cov (U,V) + \Var (V) \right)}}.
\]
With some algebra, we can rewrite this formula in terms of~$s~=~\Corr (U,V)$ and $t^2 = \Var (V) / \Var (U)$ as
\begin{equation}\label{eq:stcorr}
\Corr (U,U+V) = \frac{1+st}{\sqrt{1+2st + t^2}}.
\end{equation}

Recall that $\rho$ denotes the correlation between the logarithms of $\pi (X)$ and $1/q(X)$ for a random state $X \sim \P$. The proof of Theorem~\ref{corr} is a direct calculation of $\rho$, using Eq.\ \ref{pi propto psi}.

\begin{proof}[Proof of Theorem~\ref{corr}] 
By Eq.\ \ref{pi propto psi}, the probability $\pi (X)$ satisfies
\[
\log \pi(X) = - \log q(X) + \log \wh\pi(X) - \log \left( \sum_y \wh\pi(y)/q(y) \right).
\]
Since the last term is constant, it does not affect the correlation $\rho$, which  therefore equals 
\begin{equation*}
\Corr (-\!\log q (X), -\!\log q (X) + \log \wh\pi (X)) = \Corr (U, U+V)
\end{equation*}
in terms of $U = -\!\log q (X)$ and $V = \log \wh\pi (X)$. We then obtain the formula for $\rho$ from Eq.\ \ref{eq:stcorr}, where $s = \wh\rho$ and $t = r$.
\end{proof}

Recall the definition of the log-ratio error $\Err(\beta)$ in Eq.\ \ref{eq:err}. The proof of Theorem~\ref{mse} entails a calculation of~$\Err(\beta)$, using Eq.\ \ref{pi propto psi}.

\begin{proof}[Proof of Theorem~\ref{mse}]
    The log-ratio error $\Err(\beta)$ equals
    \[
    \frac12 \E \left( (A - B)^2 \right) = \frac12 \left( \E \left(A^2\right) - 2 \,\E(AB) + \E \left(B^2\right) \right) = \Var (A),
    \]
    in terms of the auxiliary variables
    \[
    A = \log \left(\pi(X) \middle/ \nu_\beta (X)\right) \quad \text{and} \quad B=\log \left(\pi(Y) \middle/ \nu_\beta (Y)\right), 
    \]
    due to the linearity of expectation and because $A$ and $B$ are i.i.d. 
    By Eq.\ \ref{pi propto psi}, the variance of $A$ equals
    \[
    \Var \left( \log \wh\pi (X) + (\beta-1) \log q(X) \right) = \Var(V-(\beta-1)U),
    \]
    where $U = -\!\log q(X)$ and $V = \log\wh\pi (X)$. We write the variance as
    \[
    \Var (V) - 2(\beta-1) \Cov(U,V) + (\beta-1)^2 \Var(U),
    \]
    and then identify factors of $\wh\rho$ and $r$, to find that
    \[
    \Err(\beta) = \left(r^2-2\wh\rho r (\beta-1) + (\beta-1)^2\right) \Var (U).
    \]
    The error is minimized by $\beta^\ast=1+\wh\rho r$, which satisfies
    \[
    \Err(\beta^\ast) = r^2 \left(1-{\wh\rho}^{\,2}\right) \Var (\log q (X)) = \left(1-{\wh\rho}^{\,2}\right) \Var (\log \wh\pi (X)).
    \]
    A simple but tedious manipulation of this equation using Eq.\ \ref{main est} and Eq.\ \ref{pi propto psi} shows that $\Err(\beta^\ast)$ further equals $(1-\rho^2) \Var(\log\pi(X))$. See Appendix~\ref{appendix a} for details.
\end{proof}






\newpage

\appendix
\renewcommand\thefigure{\thesection.\arabic{figure}}
\setcounter{figure}{0}

\section{Supplementary text}\label{appendix a}

\subsection{The Markov chain analogue of rattling}

Chvykov et al.\ \cite{chvykov_low_2021} use the phenomenon of thermophoresis to motivate their definition of rattling. Thermophoresis entails a Brownian particle diffusing through a temperature landscape, with a diffusivity that varies with the temperature. In this case, Chvykov et al.\ define the rattling $R(x)$ at a position $x$ in continuous space as the logarithm of the diffusivity $D(x)$. In one spatial dimension, the diffusivity at $x$ is the rate at which the mean squared displacement from $x$ grows over a short period of time $t$. In terms of the position $X(t)$ of the particle at time $t$, and the average $\langle \,\cdot\, \rangle_{x}$ over many trajectories starting at $X(0)=x$, the diffusivity equals
\[
D(x) = \lim_{t \to 0} \frac{1}{t} \left\langle (X(t) - X(0))^2 \right\rangle_{x}.
\]

To identify an analogous quantity when $x$ is a discrete state of a Markov chain $X(t)$, we instead define the displacement of the chain at time $t$ to be the indicator $\1(A_{t})$ of the event $A_{t} = \{\text{{$X(s) \neq X(0)$ for some $s \leq t$}}\}$. In other words, the displacement equals $1$ if the chain exits the initial state before time $t$, and $0$ otherwise. The corresponding diffusivity equals
\[
D(x) = \lim_{t\to 0} \frac{1}{t} \E_x \left( \1(A_{t})^2 \right) = \lim_{t\to 0} \frac{1}{t} \E_x \left( \1(A_{t}) \right) = \lim_{t\to 0} \frac{1}{t} \P_x(A_{t}) = \lim_{t\to 0} \frac{1}{t} \left( 1 - e^{-tq(x)} \right) = q(x),
\]
in terms of the expectation $\E_x$ and probability $\P_x$ with respect to the distribution of the Markov chain, starting from $X(0) = x$. The second and third equalities hold by properties of indicator random variables; the third holds because the time at which the chain first exits $x$ is an exponential random variable with a mean of $1/q(x)$; the fourth follows from the Taylor series $e^u = 1 + u + O(u^2)$. Since Chvykov et al.\ define rattling by $R(x) = \log D(x)$ in the case of thermophoresis, a natural Markov chain analogue of rattling is $\log q(x)$.

\subsection{A lower bound of \texorpdfstring{$\rho$}{p} that makes no reference to \texorpdfstring{$\protect\wh\rho$}{phat}}

The following bound is a simple consequence of Theorem~\ref{corr}, which shows that, for the correlation $\rho$ to be positive, it suffices for $r$ to be less than $1$.

\begin{corollary}\label{cor:a1}
    If $r<1$, then 
    \[
    \rho \geq \sqrt{\frac{1-r}{1+r}}.
    \]
\end{corollary}

\begin{proof}
    In terms of the quantity $\eps = (1-\wh\rho) r / (1+r)$, the correlation satisfies
    \[
    \rho = \frac{1+\wh\rho r}{\sqrt{1+2\wh\rho r + r^2}} = \frac{1-\eps}{\sqrt{1 - 2\eps/(1+r)}} \geq \sqrt{1-\eps} = \sqrt{\frac{1+\wh\rho r}{1+r}} \geq \sqrt{\frac{1-r}{1+r}}.
    \]
    The equalities and inequalities are respectively due to Theorem~\ref{corr}; the choice of $\eps$; the assumption that $r < 1$, which implies that $2\eps/(1+r)$ is less than $\eps$, and that replacing the former by the latter gives a lower bound because the numerator and denominator are both positive; the choice of $\eps$, again; and the fact that $\wh\rho \geq -1$. 
\end{proof}

\subsection{A bound on the relative error of approximating \texorpdfstring{$\pi$}{pi} by \texorpdfstring{$\nu_{\beta^\ast}$}{nu beta ast} }

Recall that we define the log-ratio error $\Err(\beta)$ for $\beta \in \R$ to be
\[
\Err (\beta) := \E \left( \frac12 \log \left( \frac{\pi(X)}{\pi(Y)} \middle/ \frac{\nu_{\beta} (X)}{\nu_{\beta} (Y)}\right)^2 \right),
\]
where expectation is taken with respect to the joint distribution of independent random variables $X \sim \P$ and $Y \sim \P$. 
In this section, for the sake of simplicity, we assume that $\P$ is the uniform distribution on $N$ states.

Unlike Theorems~\ref{thm a}~and~\ref{thm b}, which bound above the ratios $\pi/\nu_1$ and $\nu_1/\pi$, Theorem~\ref{mse} compares $\pi$ and $\nu_{\beta^\ast}$ through $\Err(\beta^\ast)$. The ratios $\pi/\nu_{\beta^\ast}$ and $\nu_{\beta^\ast}/\pi$ can be large for some states even when $\Err(\beta^\ast)$ is small because the latter error concerns the average squared differences of the logarithms of $\pi$ and $\nu_{\beta^\ast}$. Nevertheless, it is possible to bound above the ratios $\pi/\nu_{\beta^\ast}$ and $\nu_{\beta^\ast}/\pi$ in terms of $\Err(\beta^\ast)$.

\begin{proposition}\label{prop:l1 l2 bd}
If $X$ is a uniformly random state, then $\pi$ and $\nu_{\beta^\ast}$ are within a factor of 
\[
\exp \left( \sqrt{2N\Err(\beta^\ast)} \right).
\]
\end{proposition} 



Note that the following, simple bound on the log-ratio error
\[
\Err (\beta) = \E \left( \frac12 \log \left( \frac{\pi(X)}{\pi(Y)} \middle/ \frac{\nu_{\beta} (X)}{\nu_{\beta} (Y)}\right)^2 \right) \geq \frac{1}{2N^2} \max_{x,y} \log \left( \frac{\pi (x)}{\pi (y)} \middle/ \frac{\nu_\beta (x)}{\nu_\beta (y)} \right)^2
\]
only implies that $\pi$ and $\nu_{\beta^\ast}$ are within a factor of $\exp \left( \sqrt{2N^2 \Err(\beta^\ast)} \right)$.

\begin{proof}[Proof of Proposition~\ref{prop:l1 l2 bd}]
    Denote $\nu = \nu_{\beta^\ast}$. The ratio $\pi (x) /\nu (x)$ equals
    \[
    Z_\nu q(x)^{\beta^\ast} \pi(x) = Z_\nu \exp \left(\log \pi(x) + \beta^\ast \log q(x) \right),
    \]
    where $Z_\nu = \sum_y q(y)^{-\beta^\ast}$.  Using this fact twice gives 
    \[
    \frac{\pi (x)}{\pi (y)} \cdot \frac{\nu (y)}{\nu (x)} = \exp \left(\log \pi (x) - a + \beta^\ast \log q(x) - (\log \pi (y) - a + \beta^\ast \log q(y)) \right)
    \]
    for any $a \in \R$. In terms of  $d_a (x) = \log \pi (x) - a + \beta^\ast \log q(x)$, the ratios satisfy
    \[
        \max\left\{ \frac{\pi (x)}{\pi (y)} \cdot \frac{\nu (y)}{\nu (x)}, \frac{\pi (y)}{\pi (x)} \cdot \frac{\nu (x)}{\nu (y)} \right\} = \exp \left( |d_a (x) - d_a (y)| \right).
    \]
    We bound above $|d_a (x) - d_a (y)|$ using the triangle inequality and the fact that $\max_i v(i) \leq \|v\|_2$ for every $v \in \R^N$: 
    \[
    |d_a (x) - d_a (y)| \leq 2 \max_x d_a (x) \leq 2 \| d_a  \|_2.
    \]
    Because $X$ is uniformly random, $\|d_a \|_2/N$ equals the mean squared error $\E(d_a (X)^2)$ of approximating $\log\pi (X)$ by $a - \beta^\ast \log q(X)$. In the next section, Proposition~\ref{l to r} independently establishes that there is a value of $a$, denoted $a^\ast$, for which $\E(d_{a^\ast} (X)^2) = \Err(\beta^\ast)$. Since the preceding bound holds for any $a$, Proposition~\ref{l to r} implies that
    \begin{equation}\label{eq: pi nu mse bd}
        \max\left\{ \frac{\pi (x)}{\pi (y)} \cdot \frac{\nu (y)}{\nu (x)}, \frac{\pi (y)}{\pi (x)} \cdot \frac{\nu (x)}{\nu (y)} \right\} \leq \exp \left(\sqrt{2N \Err(\beta^\ast)} \right).
    \end{equation}
    Call the quantity in the bound $k = \exp (\sqrt{2N \Err(\beta^\ast)})$. Because $\pi$ and $\nu$ are probability distributions, Eq.\ \ref{eq: pi nu mse bd} implies that
    \[
        \frac{\pi (x)}{\nu (x)} = \sum_y \left( \frac{\nu (y)}{\nu (x)} \cdot \frac{\pi (x)}{\pi (y)} \right) \pi (y) \leq k,
    \]
    as well as the bound $\nu(x)/\pi(x) \leq k$. Hence, $\pi$ and $\nu$ are within a factor of $k$.
\end{proof}

\subsection{The minimizer of \texorpdfstring{$\Err(\beta)$}{L(beta)} is the slope of the line of best fit}

The log-ratio error $\Err(\beta)$ differs from the mean squared error of predicting $\log \pi(X)$ by a linear function of $-\!\log q(X)$, which is the error that governs the linear fits in Figs.~\ref{correlation example figure}~and~\ref{rem example figure}. For an intercept-slope pair $(a,b) \in \R^2$, the mean squared error is
\[
\MSE(a,b) := \E \left( (\log \pi(X) - a + b \log q(X))^2 \right).
\]
Although these errors are different, their minimizers coincide, in the sense of the following result.

\begin{proposition}\label{l to r}
    The minimizers $\beta^\ast$ of $\Err(\beta)$ and $(a^\ast,b^\ast)$ of $\MSE(a,b)$ satisfy $\beta^\ast=b^\ast$. Moreover, $\Err(\beta^\ast) = \MSE(a^\ast,b^\ast)$. 
\end{proposition}

The proof uses the basic fact that, if $U$ and $V$ are random variables with positive, finite variances, then the approximation of $U+V$ by $a + bU$ for $(a,b) \in \R^2$ has a mean squared error of
\[
\E \big( (U+V - a - b U)^2 \big),
\]
which is minimized by $(a^\ast,b^\ast)$ with 
\begin{equation}\label{eq:best slope}
    b^\ast = 1 + \frac{\Cov (U,V)}{\Var (U)}.
\end{equation}
Moreover, the associated error satisfies
\begin{equation}\label{eq:mse mb}
    \MSE (a^\ast,b^\ast) = \Var (V) - (1-b^\ast)^2 \Var (U).
\end{equation}

\begin{proof}[Proof of Proposition~\ref{l to r}]
    By Theorem~\ref{mse}, it suffices to show that $b^\ast = 1 + \wh\rho r$ and $\MSE(a^\ast,b^\ast) = \left(1-\wh\rho^{\,2}\right) \Var (\log \wh\pi(X))$. According to Eq.~\ref{pi propto psi}, the mean squared error satisfies
    \[
    \MSE(a,b) = \E \left( \left(\log \wh \pi (X) - \log q(X) - a + b\log q(X) \right)^2 \right) = \E \big( (U+V - a - bU)^2 \big),
    \]
    in terms of $U = -\!\log q (X)$ and $V = \log \wh\pi (X)$. Hence, by Eq.\ \ref{eq:best slope}, its minimizer $(a^\ast,b^\ast)$ has
    \begin{equation}\label{eq:beta}
    b^\ast = 1 + \frac{\Cov (U,V)}{\Var (U)} = 1 + \wh\rho r.
    \end{equation}
    Concerning the error $\MSE(a^\ast,b^\ast)$, by substituting Eq.\ \ref{eq:beta} into Eq.\ \ref{eq:mse mb}, we find that
    \[
    \left(\frac{\Var(V)}{\Var(U)} - (1-b^\ast)^2\right) \Var (U) = r^2 \left(1-\wh\rho^{\,2}\right) \Var (\log q(X)) = \left(1-\wh\rho^{\,2}\right) \Var (\log \wh\pi(X)).
    \]
\end{proof}

\subsection{The equality of two expressions for the error \texorpdfstring{$\Err(\beta^\ast)$}{L(beta ast)} }

At the end of the proof of Theorem~4, we state that simple but tedious algebra, along with Eq.~\ref{main est} and Eq.~\ref{pi propto psi}, establishes the equality
\[
\left(1-{\wh\rho}^{\,2}\right) \Var (\log \wh\pi (X)) = \left(1-\rho^2\right) \Var (\log \pi (X)).
\]
Indeed, the formula for $\rho$ (Eq.~\ref{main est}) implies that
\[
1 - \rho^2 = 1 - \left(\frac{1+\wh\rho r}{\sqrt{1 + 2 \wh\rho r + r^2}}\right)^2 = \frac{r^2 \left(1 - {\wh\rho}^{\,2}\right)}{1+2\wh\rho r + r^2},
\]
while the expression for $\pi$ (Eq.~\ref{pi propto psi}) gives
\begin{align*}
\Var (\log \pi(X)) &= \Var (\log \wh\pi(X)) + 2 \Cov (\log\wh\pi (X), -\log q(X)) + \Var(\log q(X))\\ 
&= \Var(\log q(X)) \left(\frac{\Var(\log\wh\pi(X))}{\Var(\log q(X))} + \frac{2\Cov(\log\wh\pi(X),-\log q(X))}{\Var(\log q(X))} + 1 \right)\\ 
&= \Var(\log q(X)) \left(r^2 + 2 \wh\rho r + 1\right).
\end{align*}
Together, these two expressions imply that
\[
\left(1-\rho^2\right) \Var (\log \pi (X)) = \frac{r^2 \left(1 - {\wh\rho}^{\,2}\right)}{1+2\wh\rho r + r^2} \cdot \Var(\log q(X)) \left(r^2 + 2 \wh\rho r + 1\right) = \left(1-{\wh\rho}^{\,2}\right) \Var (\log \wh\pi (X)).
\]

\newpage

\section{Supplementary figures}

\begin{figure}[!htbp]
\centering
\includegraphics[width=\textwidth]{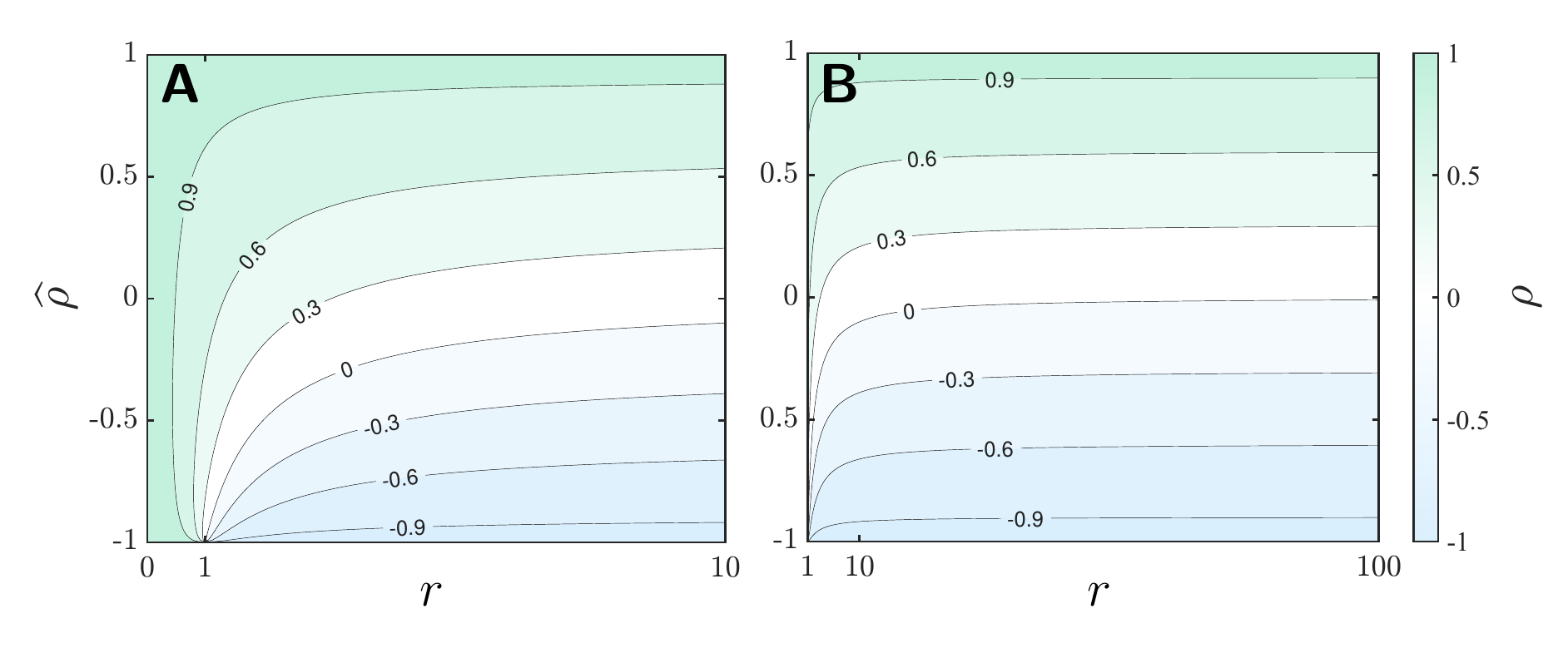}
\caption{Contour plots of $\rho$ in terms of $\protect\wh\rho$ and $r$ for $0 \leq r \leq 10$ (A) and $1 \leq r \leq 100$ (B). (A) When $r \ll 1$, the contours of $\rho$ are vertical lines, which reflects the fact that $r$ primarily determines $\rho$ in this region. (B) When $r \gg 1$, the contours of $\rho$ are horizontal lines, which reflects the fact that $\rho \approx \protect\wh\rho$ in this region.}
\label{fig:b1}
\end{figure}

\begin{figure}[!htbp]
\centering
\includegraphics[width=0.85\textwidth]{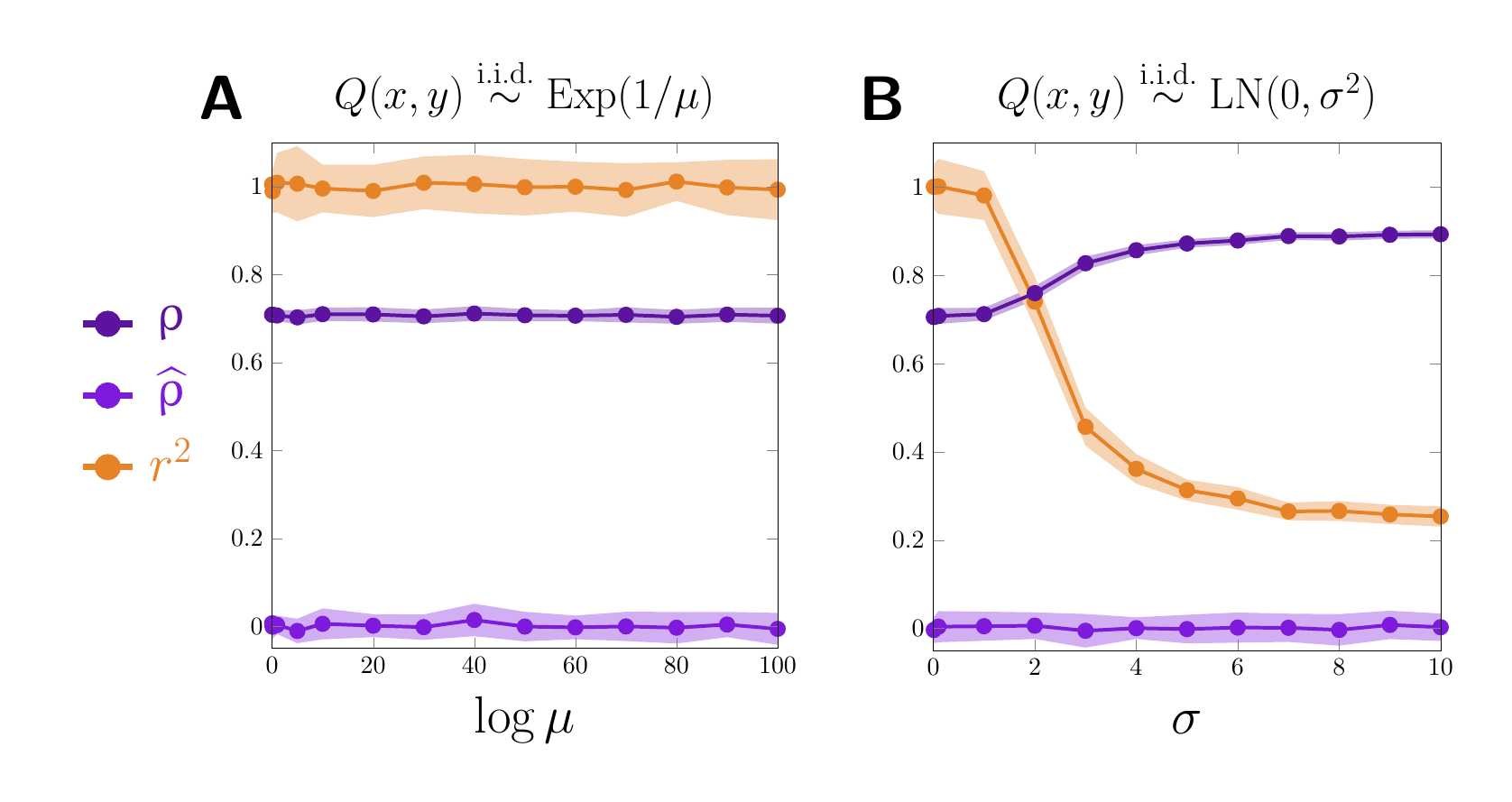}
\caption{Markov chains with random, directed rates. (A, B) The states of the chain correspond to the vertices of the complete graph with $2^{10}$ vertices, with a pair of directed rates connecting every pair of vertices. (A) Independent and identically distributed (i.i.d.) exponential rates produce a chain with values of $\rho$, $\protect\wh\rho$, and $r^2$ that essentially remain constant as the mean (and variance) $\mu$ of the rates varies over many orders of magnitude. (B) In contrast, when the rates are i.i.d.\ log-normal random variables, increasing the standard deviation parameter $\sigma$ results in a decrease of $r^2$ and concomitant increase in $\rho$, while $\protect\wh\rho$ remains relatively constant. Note that the variance of the rates increases exponentially with $\sigma$. (A, B) The plotted lines reflect the mean values of the quantities over $25$ trials; the shaded regions above and below the plotted lines represent $\pm 1$ standard deviation.}
\label{fig:b2}
\end{figure}

\begin{figure}[!htbp]
\centering
\includegraphics[width=\textwidth]{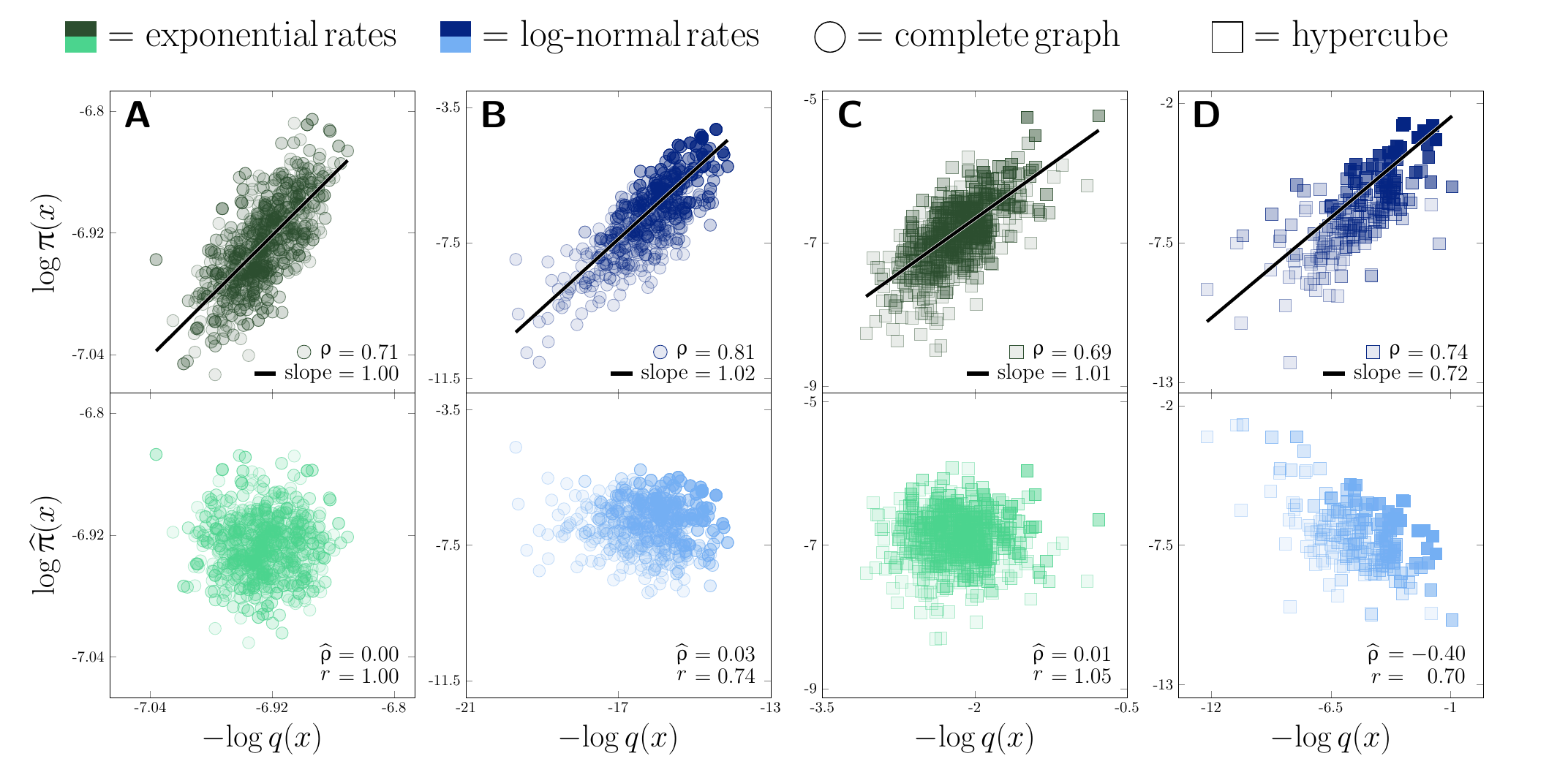}
\caption{A variant of Fig.~\ref{correlation example figure} with the random state $X$ distributed according to $\pi$ instead of uniformly at random.}
\label{fig:b3}
\end{figure}

\begin{figure}[!htbp]
\centering
\includegraphics[width=11.4cm]{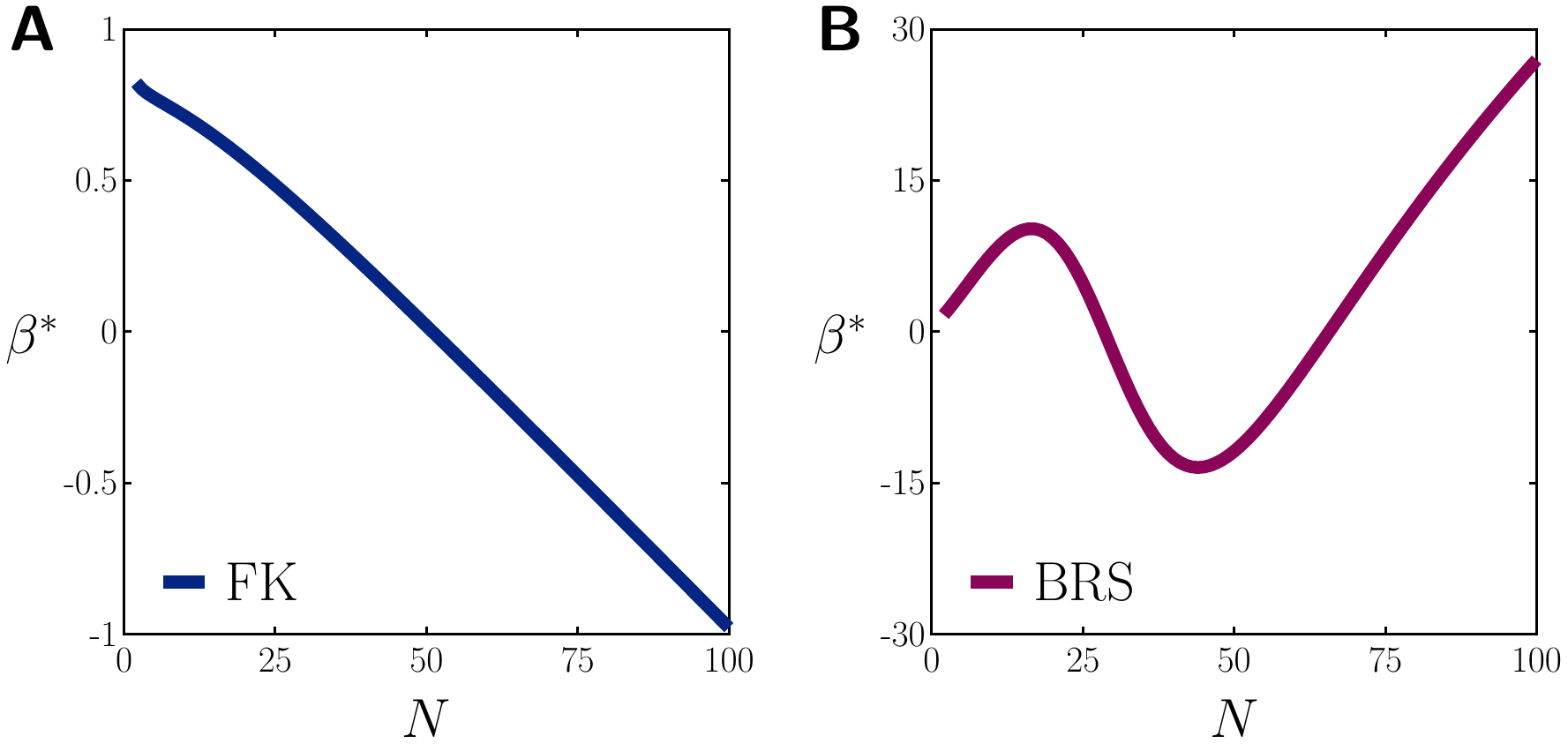}
\caption{Slopes $\beta^{\!\ast}$ versus colony size $N$ for the FK and BRS models.}
\label{fig:b4}
\end{figure}

\begin{figure}[!htbp]
\centering
\vspace{1em}
\includegraphics[width=\textwidth]{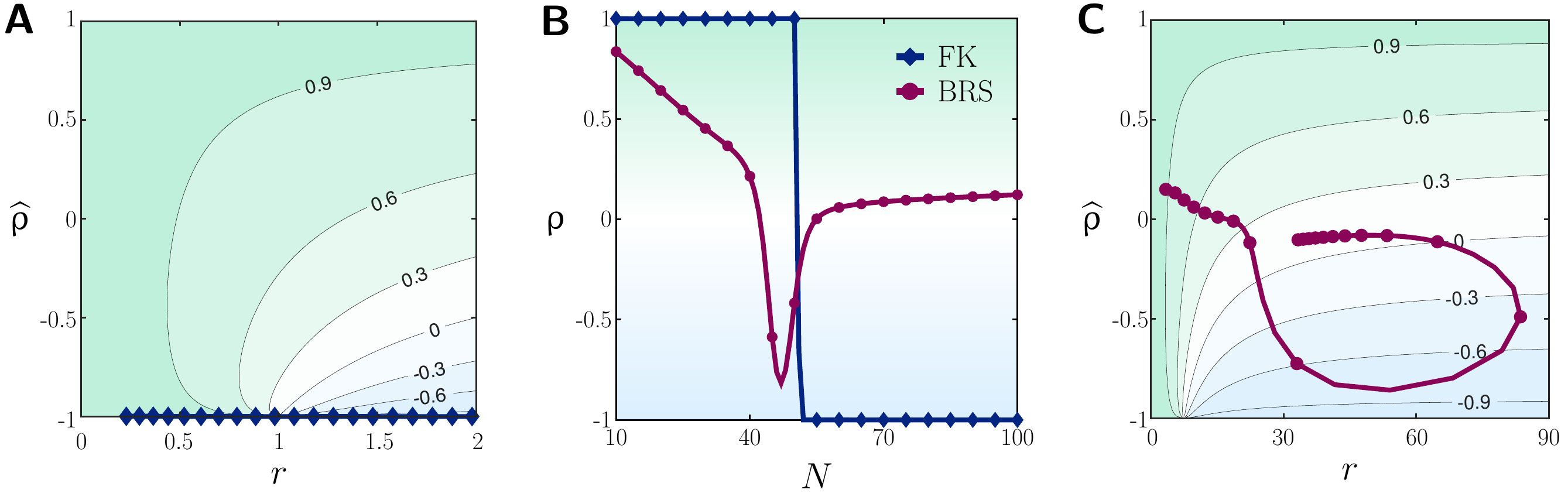}
\caption{Variants of the contour diagrams from Fig.~\ref{contour fig} in which the random state $X$ is distributed according to $\pi$ instead of the uniform distribution on states. (A, B) The plots for the FK model are indistinguishable from those in Fig.~\ref{contour fig}, while (B, C) those of the BRS model are markedly different.}
\label{fig:b5}
\end{figure}

\begin{figure}[!htbp]
\centering
\includegraphics[width=11.4cm]{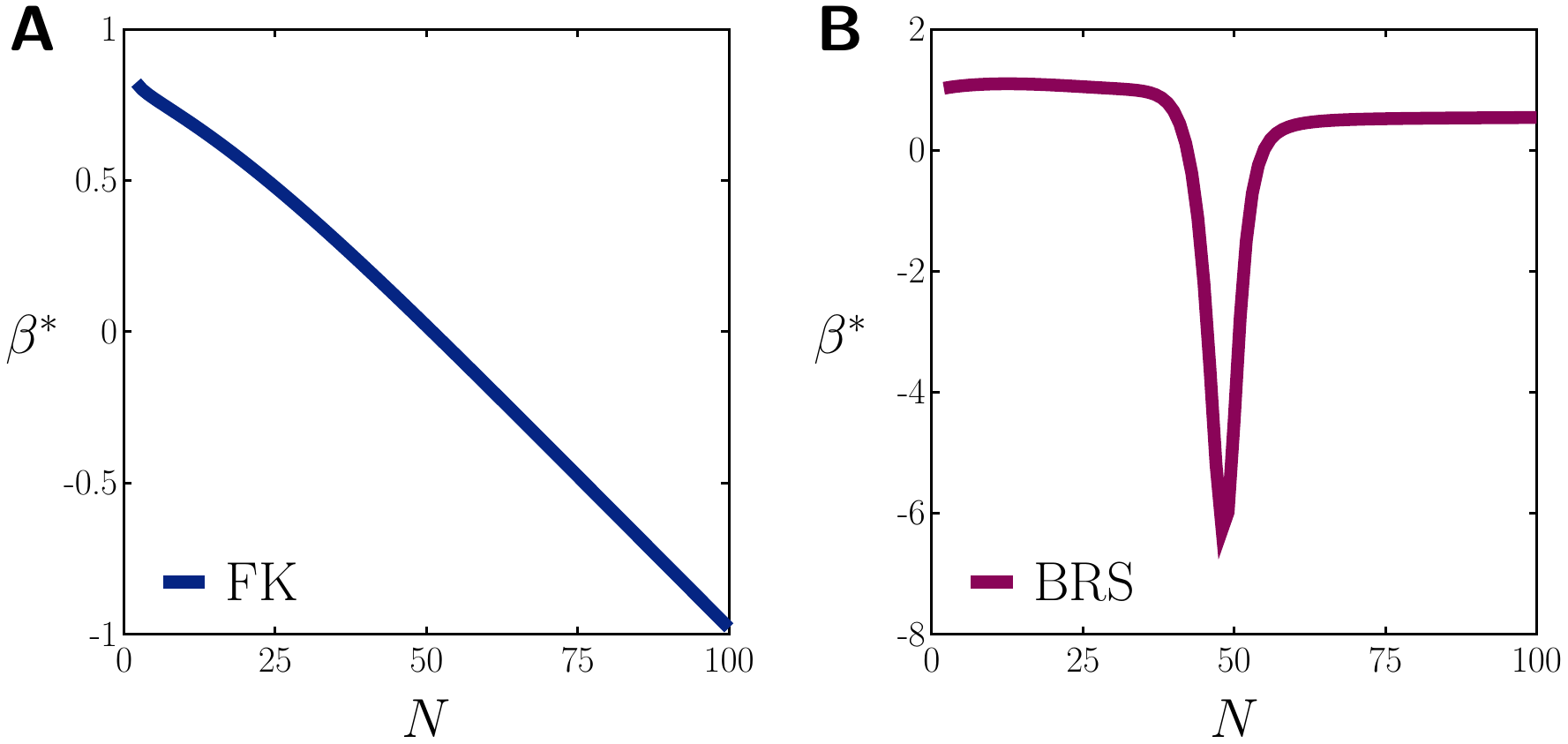}
\caption{Slopes $\beta^{\!\ast}$ versus colony size $N$ for the FK and BRS models, where the random state $X$ is distributed according to $\pi$ instead of the uniform distribution.}
\label{fig:b6}
\end{figure}

\end{document}